\documentclass[nonblindrev]{style/informs3noheader}

\OneAndAHalfSpacedXI
\usepackage[breaklinks, hidelinks, colorlinks = true, linkcolor = blue, urlcolor = blue, anchorcolor = blue, citecolor = blue]{hyperref}
\usepackage[utf8]{inputenc}
\usepackage{graphicx}
\usepackage[numbers]{natbib}
\usepackage{booktabs,caption,fixltx2e,threeparttable}
\usepackage{amssymb}
\usepackage{subcaption}
\usepackage{float}
\usepackage{indentfirst}
\usepackage{textcomp}
\usepackage{mathtools}
\usepackage{tabularx}
\usepackage{amsmath}
\usepackage{amsfonts}
\usepackage{todonotes}
\usepackage{optidef}
\usepackage{bbm}
\usepackage[shortlabels]{enumitem}
\usepackage{algorithm}
\usepackage{algorithmicx}
\usepackage{algpseudocode}
\usepackage{bm}
\usepackage{comment}
\usepackage{multirow}
\usepackage[export]{adjustbox}

\usepackage{tikz}
\usetikzlibrary{matrix,patterns,positioning,decorations,arrows,shapes,decorations.markings,calc,fit}

\newtheorem{defin}{Definition}
\newtheorem{assump}{Assumption}

\newtheorem{prop}{Proposition}

\newtheorem{ex}{Example}

\newcommand{\mbb}{\mathbb}
\newcommand{\mb}{\mathbf}

\usepackage{tikz}
\usepackage{subcaption}
\usetikzlibrary{decorations.pathreplacing, decorations.markings, arrows.meta}

\definecolor{lightBlue}{RGB}{184,207,224}  
\definecolor{medBlue}{RGB}{58,110,165}     
\definecolor{darkBlue}{RGB}{30,63,110}     

\definecolor{bgray50}{RGB}{242,242,240}
\definecolor{bgray100}{RGB}{211,209,199}
\definecolor{bgray200}{RGB}{190,190,185}
\definecolor{bgray400}{RGB}{136,135,128}
\definecolor{bgray600}{RGB}{95,94,90}
\definecolor{bgray800}{RGB}{50,49,47}

\definecolor{alertcolor}{RGB}{30,63,110}

\tikzset{
  mid arrow/.style={
    postaction={decorate,
      decoration={markings,
        mark=at position 0.5 with {\arrow{stealth}}
      }
    }
  },
  mid arrow thick/.style={
    postaction={decorate,
      decoration={markings,
        mark=at position 0.5 with {\arrow[line width=1.5pt]{stealth}}
      }
    }
  },
  bkt/.style      = {rectangle, rounded corners=3pt, inner sep=0pt,
                     fill=white, draw=darkBlue,
                     minimum width=2.0cm, minimum height=2.25cm},
  nd/.style       = {circle, draw=bgray600, fill=bgray50, inner sep=0pt,
                     minimum size=18pt, font=\small},
  block/.style    = {rectangle, rounded corners=6pt},
  el1/.style      = {rectangle, draw=darkBlue, fill=white, inner sep=0pt,
                     line width=0.5pt, minimum size=6pt},
  grarc/.style    = {draw=bgray400, line width=0.5pt, dashed, mid arrow},
  patharc/.style  = {draw=alertcolor, line width=1.5pt, mid arrow thick},
  interarc/.style = {draw=bgray400, line width=0.4pt, mid arrow},
  zoomarc/.style  = {draw=darkBlue, line width=0.5pt, mid arrow},
  brace/.style    = {decorate,
                     decoration={brace, amplitude=5pt, raise=4pt}},
  brace2/.style   = {decorate,
                     decoration={brace, amplitude=5pt, raise=4pt}},
}

\newcommand{\Block}[4]{%
  \node[block, fill=medBlue!20, draw=darkBlue, line width=0.8pt,
        minimum width=2.2cm, minimum height=4.8cm] (#3) at (#1,#2) {};
  \node[font=\small\bfseries, darkBlue, above=2pt]
        at (#3.north) {$#4$};%
}

\newcommand{\Elem}[3]{%
  \node[el1] (#1) at (#2,#3) {};%
}

\newcommand{\Subpath}[2]{%
  \node[circle, draw=darkBlue, fill=white, inner sep=0pt,
        minimum size=7pt] (#1) at #2 {};%
}

\newcommand{\Rep}[2]{%
  \fill[medBlue] #2 circle (3.5pt);%
  \coordinate (#1) at #2;%
}

\newcommand{\SourceSink}[2]{%
  \stnode{s}{(#1, 0)}%
  \stnode{t}{(#2, 0)}%
}

\newcommand{\stnode}[2]{%
  \node[nd] (#1) at #2 {$#1$};%
}

\newcommand{\SubpathBrace}[1]{%
  \draw[brace, darkBlue, thick]
    ([yshift=-6pt]#1.south east) -- ([yshift=-6pt]#1.south west)
    node[midway, below=12pt, font=\small, darkBlue]
    {Subpath};%
}

\usepackage{natbib}
 \bibpunct[, ]{(}{)}{,}{a}{}{,}%
 \def\bibfont{\small}%

\EquationsNumberedThrough
\TheoremsNumberedThrough

\allowdisplaybreaks

\MANUSCRIPTNO{TRSC-0001-2024.00}

\begin{document}

\RUNAUTHOR{Van Rossum, Van Lieshout, and Jacquillat}

\RUNTITLE{Adaptive Partitioning in Column Generation for Nested Paths}
\TITLE{\Large Adaptive Partitioning in Column Generation for Nested Paths}
\ARTICLEAUTHORS{
\AUTHOR{Bart van Rossum}
\AFF{Department of Industrial Engineering \& Innovation Sciences, Eindhoven University of Technology, \EMAIL{b.t.c.v.rossum@tue.nl}}

\AUTHOR{Rolf van Lieshout}
\AFF{Department of Industrial Engineering \& Innovation Sciences, Eindhoven University of Technology, \EMAIL{r.n.v.lieshout@tue.nl}}

\AUTHOR{Alexandre Jacquillat}
\AFF{Sloan School of Management, Massachusetts Institute of Technology, \EMAIL{alexjacq@mit.edu}}
}

\ABSTRACT{
We study a class of nested path problems, in which every path-based variable can be decomposed into a sequence of subpaths. Subpaths must satisfy local resources, while paths must satisfy additional global resources. This paper develops a new exact pricing algorithm in column generation for these problems that avoids the enumeration of non-dominated subpaths. The algorithm relies on adaptive partitioning of subpaths into buckets characterizing the consumption of global path resources. The algorithm represents each bucket by its subpath of minimum reduced cost, and iterates between pessimistic and optimistic pricing steps to combine subpaths into paths while maintaining upper and lower bounds on the minimum reduced cost. An adaptive refinement procedure closes the gap in a finite number of iterations. We demonstrate the effectiveness of the algorithm on two applications. For the balanced multi-period capacitated vehicle routing problem, we obtain speed-ups of up to a factor of 13 over a state-of-the-art column generation benchmark, and the resulting branch-price-and-cut algorithm solves three times as many instances to optimality as a subpath-based baseline. For the robust railway crew scheduling problem, we obtain speed-ups of up to a factor of three and produce primal solutions within 1\% of optimality.
}

\KEYWORDS{Column generation, Resource-constrained shortest path, Vehicle routing, Crew scheduling} 

\maketitle

\section{Introduction}
\label{sec:intro}

Many transportation planning problems can be modeled as path problems, where a set of elements must be covered by a minimum-cost set of resource-feasible paths. Well-known examples include vehicle routing, where customers are covered by routes, and railway crew scheduling, where tasks are covered by duties. These problems are typically solved with column generation to circumvent the exponential growth in the number of paths \citep{desrosiers2026branch}.

Several problem variants arising in practice feature a \textit{nested path} structure: every column corresponds to a \emph{path}, which can itself be decomposed into a sequence of \emph{subpaths}. Subpaths must satisfy local \emph{subpath resources} pertaining only to their own block of elements, while paths must satisfy additional global \emph{path resources} that tie subpaths together. We present two applications that will serve as motivating examples in this paper and the subject of our computational experiments.

\begin{ex}{Balanced Multi-Period Capacitated Vehicle Routing Problem (B-MPCVRP) \citep{nekooghadirli2026workload}}
\label{ex:cvrp}
We consider a fleet of $K$ homogeneous, capacitated vehicles serving customers over a planning horizon of $T$ days. Each day, a subset of customers $N_t$ must be visited. A \emph{route} is a feasible sequence of customer visits starting and ending at the depot that respects the vehicle capacity. A \emph{schedule} assigns one route to each day of the planning horizon, subject to a maximum total distance $D$ per vehicle. The B-MPCVRP aims to assign a schedule to each vehicle to visit every customer exactly once and minimize routing distance. 
\end{ex}

\begin{ex}{Robust Crew Scheduling Problem (RCSP) \Citep{rahlmann2021robust, rossum2025benders}}
\label{ex:rcsp}
We consider a set of timetable scenarios, each containing a set of tasks (e.g., trips). A \emph{duty} is a feasible sequence of tasks within a scenario that starts and ends at the same depot, and satisfies labor rules on transition times, meal breaks, and maximum duty length. A \emph{template} is a sequence of duties, one per scenario, such that the difference between the latest end time and earliest start time satisfies a maximum template length. The RCSP seeks templates of minimum cost that cover every task at least once in every scenario.
\end{ex}

More broadly, the nested path structure is prevalent across transportation planning problems. For instance, in the electric vehicle routing problem, overall paths can be broken down into subpaths between charging stations \citep{jacquillat2024subpath}. In integrated railway crew replanning, subpaths correspond to crew duties and paths define replanned rosters during scheduled maintenance \citep{breugem2022column}. In integrated airline crew pairing and rostering \citep{saddoune2012integrated}, subpaths define pairings of compatible flights and full paths aggregate them into monthly schedules satisfying rostering constraints.

This paper develops a new exact pricing algorithm for column generation in these nested path problems. A natural baseline is to generate path-based variables directly from individual elements via a labeling algorithm that simultaneously tracks all subpath and path resources. However, this approach fails to exploit the nested structure and may not scale to practically relevant problem sizes as a result. State-of-the-art column generation algorithms for nested path problems therefore further decompose the pricing problem at the subpath level \citep{dohn2013branch, jacquillat2024subpath}. This approach first enumerates all non-dominated subpaths (i.e., those such that no other subpath has both lower reduced cost and lower path resource consumption) and then combines them into paths via a simpler labeling algorithm over a subpath graph. Yet, it can still become computationally intractable when the number of non-dominated subpaths grows large.

In response, we propose an exact pricing algorithm that avoids exhaustive enumeration of non-dominated subpaths via adaptive partitioning. The algorithm partitions subpaths into \emph{buckets} that constrain the consumption of each global path resource. Pricing proceeds on a compact \emph{bucket graph} collecting representative subpaths of minimum reduced cost across buckets, and combining them into full paths subject to global resource constraints. We devise pessimistic and optimistic pricing steps that maintain upper and lower bounds on the minimum reduced cost. A refinement procedure iteratively makes the partition more granular to eliminate the incumbent solution and tighten the bounds. We prove that the adaptive partitioning scheme converges to a path of minimum reduced cost in a finite number of iterations, thus yielding exact column generation and branch-price-and-cut algorithms.

We evaluate the algorithm on two applications with contrasting computational profiles. For the B-MPCVRP, buckets are defined along a single resource dimension but finding bucket representatives is computationally demanding. The adaptive partitioning algorithm achieves speed-ups of up to a factor of 13 over a column generation benchmark relying on the enumeration of non-dominated subpaths. These benefits enable the implementation of a path-based branch-price-and-cut algorithm, which solves three times as many instances to optimality as a subpath-based branch-price-and-cut baseline. For the RCSP, buckets are defined along two resource dimensions and finding representatives is cheap, but convergence is slowed by the degenerate problem structure. We obtain speed-ups of up to a factor of three, and use a diving heuristic to find solutions within 1\% of optimality. Due to the contrasting nature of these two problems, the adaptive partitioning algorithm provides benefits through different mechanisms and with different configurations. Specifically, when the pricing problem is already hard over subpaths (as in the B-MPCVRP), improvements primarily stem from avoiding the enumeration of all non-dominated subpaths. In contrast, when the pricing problem is easier over subpaths (as in the RCSP), the improvements stem from maintaining a coarser bucket graph when combining subpaths into paths.

In summary, this paper makes three main contributions. First, we define a class of nested path problems, encompassing a broad range of problems arising in transportation planning and operations. Second, we propose an adaptive partitioning algorithm in column generation for these problems without necessitating ex-ante subpath enumeration. The algorithm provably solves the pricing problem to optimality over a sparser ``bucket graph'' in a finite number of iterations, which gets refined over time to tighten lower and upper bounds on the minimum reduced cost. Third, we demonstrate the computational efficiency of the algorithm on two transportation applications. Results identify substantial speed-ups over state-of-the-art column generation benchmarks, yielding stronger solutions and tighter bounds in large-scale and otherwise intractable problems.

The remainder of this paper is structured as follows. We review related literature in Section~\ref{sec:literature}. In Section~\ref{sec:problem}, we introduce the general nested path problem and describe the state-of-the-art benchmark pricing algorithm. In Section~\ref{sec:bucketPricing}, we present the adaptive partitioning algorithm and establish its correctness and finite convergence. In Sections~\ref{sec:cvrp} and~\ref{sec:crew}, we demonstrate the algorithm's effectiveness on the balanced multi-period vehicle routing problem and the robust railway crew scheduling problem, respectively. In Section~\ref{sec:conclusion}, we conclude and outline future research directions.

\section{Literature Review}
\label{sec:literature}
Our work relates to decomposition algorithms for nested path problems in transportation and to solution methods based on adaptive discretization. We note that the nested path structure studied in this paper is distinct from the literature on `nested column generation', in which the pricing problem is itself solved via column generation \citep{vanderbeck2001nested}.

\paragraph{Nested paths.}
Nested path problems can be modeled via path-based or subpath-based formulations. A subpath-based formulation quells the exponential growth of the number of variables, but requires explicit linking constraints to model the path resources, which might result in a weaker linear relaxation and higher symmetry. Column generation methods on these formulations involve adding subpath-based variables iteratively \citep{sadykov2013column,delorme2020enhanced,jacquillat2022optimizing,martin2024double}. \citet{breugem2022column} employ a duty-based formulation in the integrated crew rescheduling problem with explicit rostering constraints, using valid inequalities to tighten the linear relaxation. \Citet{rossum2025benders} propose a duty-based formulation for the RCSP and employ Benders decomposition to accommodate side constraints on admissible combinations of templates. A path-based formulation, by contrast, tightens the linear relaxation by relegating linking constraints to the pricing problem \citep{desrosiers2026branch,yamin2026dantzig}. However, this comes at the cost of increased complexity in the pricing problem to find a path (rather than a subpath) of minimum reduced cost. In response, this paper proposes an exact and efficient pricing algorithm for path-based formulations of nested path problems.

A possible pricing algorithm for path-based formulations involves generating full paths directly. For instance, \citet{saddoune2012integrated} price out entire crew schedules from flights, bypassing the intermediate level of flight pairings. However, this approach can become computationally intractable as paths become longer. This limitation motivates alternative two-stage approaches exploiting the nested path structure in the pricing problem. In the RCSP (Example~\ref{ex:rcsp}), \citet{rahlmann2021robust} first aggregate tasks into duties and then duties into templates. They modify the reduced cost criterion to avoid full enumeration of duties, but do not establish convergence guarantees. Similarly, in the B-MPCVRP (Example~\ref{ex:cvrp}), \citet{nekooghadirli2026workload} propose a sequential heuristic that first determines cost-efficient routes per day and subsequently assigns them to schedules to minimize the maximum workload across vehicles. More closely related to our work, \citet{dohn2013branch} study a generalized staff rostering problem in which shifts are assigned to staff members in the form of rosters, and solve it via column generation with a three-stage procedure that enumerates all non-dominated roster lines. In the electric vehicle routing problem, \citet{jacquillat2024subpath} develop a two-stage pricing algorithm enumerating non-dominated subpaths and then combining them into full paths, with theoretical guarantees of correctness and finite convergence.

This paper contributes a new pricing algorithm that exploits the nested structure of the problem, but does not rely on the explicit enumeration of all non-dominated subpaths. Instead, the adaptive partitioning scheme maintains a coarsened approximation of the subpath space in a path-based labeling scheme, and iteratively refines it until convergence to a path of minimum reduced cost.

\paragraph{Adaptive discretization.}
Our algorithm falls into a broader stream of literature that maintains a relaxation of the problem of interest and iteratively refines it based on the incumbent solution. In a continuous service network design problem, the dynamic discretization discovery approach from \citet{boland2017continuous} relies on a partially time-expanded network with a coarse enough discretization to induce a relaxation, and refines the discretization iteratively when the incumbent solution violates time-window requirements. In a capacitated vehicle routing problem, the column elimination algorithm from \citet{karahalios2023column} represents routes as integer flows through a compact decision diagram, and iteratively removes violated paths. In a facility location setting with demand-supply interactions, \citet{kai2022vertiport} develop an exact adaptive discretization algorithm that approximates a nonconvex demand function with piecewise linear segments, iterating between a conservative and a relaxed model. Rather than maintaining a relaxation of the master problem, our algorithm refines a coarsened approximation of the subpath graph in the pricing problem.

This distinction relates to iterative pricing algorithms in column generation. \citet{righini2008new} and \citet{boland2006accelerated} develop a decremental state space relaxation approach that solves a resource-constrained elementary shortest path problem over a relaxed state space and dynamically adds additional resources whenever the resulting paths violate the original constraints. Related to our bucket-based partitioning scheme, \Citet{jaarsveld2015improving} solve a spare part inventory control problem via column generation in which the pricing problem seeks single-item $(s, S)$ policies by iteratively refining a grid of parallelograms. Our  adaptive partitioning algorithm develops, for the first time to our knowledge, this methodological idea for the nested path structure. By partitioning the subpath space into buckets based on resource consumption, the algorithm exploits the separation between subpath and path resources to decompose the pricing problem across subpaths while maintaining guarantees of exactness and finite convergence.

\section{Problem Description}
\label{sec:problem}

Consider a directed graph $G = (\{s, t\} \cup K, A)$, consisting of a source $s$, a sink $t$, and a set $K$ of elements connected through a set $A$ of arcs. Each arc $a \in A$ has cost $c_a$. We define a set $R$ of resource constraints. The problem consists of covering all elements in $K$ by a minimum-cost set of source-sink paths that satisfy the resource constraints stored in $R$.

We formalize a nested path structure by making assumptions on the graph $G$ and the set of resource constraints $R$. First, the set $K$ can be partitioned into $K_1, \dots, K_n$ ordered subsets, called \emph{blocks}. Elements within a block may be connected, but inter-block arcs can only pass from block $K_i$ to $K_{i+1}$. We assume full connectivity between consecutive blocks, i.e., the inter-block arc set is $K_i \times K_{i+1}$ for each $i = 1, \dots, n-1$. Similarly, every element in $K_1$ is reachable from the source $s$, and every element in $K_n$ is connected to the sink $t$. As such, any path $\pi$ in $G$ can be uniquely written as a sequence of subpaths $\pi = (s,\sigma_1, \dots, \sigma_n,t)$, where subpath $\sigma_i$ only traverses elements in block $K_i$. Figure~\ref{fig:element_graph} illustrates an element graph consisting of three blocks.

\begin{figure}[h!]
\centering
\begin{tikzpicture}

  \SourceSink{-2.5}{9.7}

  \Block{0}{0}{K1}{K_1}
  \Elem{a1}{-0.85}{ 1.80}  \Elem{a2}{ 0.20}{ 2.12}  \Elem{a3}{ 0.90}{ 1.66}
  \Elem{a4}{-0.80}{ 0.65}  \Elem{a5}{ 0.30}{ 0.37}  \Elem{a6}{ 0.85}{ 0.92}
  \Elem{a7}{-0.75}{-0.69}  \Elem{a8}{ 0.50}{-0.97}  \Elem{a9}{-0.70}{-2.03}

  \draw[grarc] (a1)--(a2); \draw[grarc] (a2)--(a3);
  \draw[grarc] (a1)--(a4); \draw[grarc] (a2)--(a5); \draw[grarc] (a2)--(a6);
  \draw[grarc] (a3)--(a6); \draw[grarc] (a4)--(a5); \draw[grarc] (a5)--(a6);
  \draw[grarc] (a4)--(a7); \draw[grarc] (a5)--(a8); \draw[grarc] (a6)--(a8);
  \draw[grarc] (a7)--(a8); \draw[grarc] (a7)--(a9);
  \draw[grarc] (s)--(a1);  \draw[grarc] (s)--(a4);
  \draw[grarc] (s)--(a7);  \draw[grarc] (s)--(a9);

  \Block{3.6}{0}{K2}{K_2}
  \Elem{b1}{2.80}{ 1.89}  \Elem{b2}{4.10}{ 1.62}
  \Elem{b3}{2.90}{ 0.88}  \Elem{b4}{4.05}{ 1.20}
  \Elem{b5}{2.85}{-0.14}  \Elem{b6}{4.00}{-0.42}
  \Elem{b7}{2.95}{-1.57}  \Elem{b8}{3.90}{-2.08}

  \draw[grarc] (b1)--(b2); \draw[grarc] (b1)--(b3); \draw[grarc] (b2)--(b4);
  \draw[grarc] (b3)--(b4); \draw[grarc] (b3)--(b5); \draw[grarc] (b4)--(b6);
  \draw[grarc] (b5)--(b6); \draw[grarc] (b5)--(b7); \draw[grarc] (b6)--(b8);
  \draw[grarc] (b7)--(b8);
  \draw[grarc] (a3)--(b1); \draw[grarc] (a3)--(b3);
  \draw[grarc] (a6)--(b1); \draw[grarc] (a6)--(b3); \draw[grarc] (a6)--(b5);
  \draw[grarc] (a8)--(b3); \draw[grarc] (a8)--(b5); \draw[grarc] (a8)--(b7);
  \draw[grarc] (a9)--(b5); \draw[grarc] (a9)--(b7);

  \Block{7.2}{0}{K3}{K_3}
  \Elem{c1}{6.45}{ 1.85}  \Elem{c2}{7.20}{ 2.12}  \Elem{c3}{7.85}{ 1.71}
  \Elem{c4}{6.55}{ 0.83}  \Elem{c5}{7.35}{ 0.51}
  \Elem{c6}{6.55}{-0.42}  \Elem{c7}{7.35}{-0.74}  \Elem{c8}{7.85}{-0.37}
  \Elem{c9}{6.60}{-1.62}  \Elem{c10}{7.30}{-1.98}

  \draw[grarc] (c1)--(c2); \draw[grarc] (c2)--(c3); \draw[grarc] (c1)--(c4);
  \draw[grarc] (c4)--(c5); \draw[grarc] (c2)--(c5); \draw[grarc] (c4)--(c6);
  \draw[grarc] (c6)--(c7); \draw[grarc] (c7)--(c8); \draw[grarc] (c5)--(c8);
  \draw[grarc] (c6)--(c9); \draw[grarc] (c7)--(c9); \draw[grarc] (c9)--(c10);
  \draw[grarc] (b2)--(c1); \draw[grarc] (b2)--(c4);
  \draw[grarc] (b4)--(c1); \draw[grarc] (b4)--(c4); \draw[grarc] (b4)--(c6);
  \draw[grarc] (b6)--(c4); \draw[grarc] (b6)--(c6); \draw[grarc] (b6)--(c9);
  \draw[grarc] (b8)--(c6); \draw[grarc] (b8)--(c9);
  \draw[grarc] (c3)--(t);  \draw[grarc] (c5)--(t);
  \draw[grarc] (c8)--(t);  \draw[grarc] (c10)--(t);

  \draw[patharc] (s)--(a4);  \draw[patharc] (a4)--(a5); \draw[patharc] (a5)--(a8);
  \draw[patharc] (a8)--(b5); \draw[patharc] (b5)--(b6);
  \draw[patharc] (b6)--(c6); \draw[patharc] (c6)--(c7);
  \draw[patharc] (c7)--(c8); \draw[patharc] (c8)--(t);

  \SubpathBrace{K1}
  \SubpathBrace{K2}  
  \SubpathBrace{K3}

  \draw[brace2, bgray600, thick]
  ([yshift=-100pt]t.south) -- ([yshift=-100pt]s.south)
  node[midway, below=14pt, font=\small, bgray800]
  {Path};

\end{tikzpicture}
\caption{Element graph consisting of three blocks.}
\label{fig:element_graph}
\end{figure}

Second, the resources $R$ can be partitioned into path and subpath resources accordingly. In particular, we assume that $R = \bigcup\limits_{i=0}^{n} R_i$, where $R_0$ denotes the set of global path resources, applied to all arcs in $A$, and $R_1,\dots,R_n$ store local subpath resources applied to arcs within block $K_i$. We further formalize this notion in Section~\ref{subsec:problem_resource}.

This nested path structure is satisfied in our two motivating examples. In the B-MPCVRP (Example~\ref{ex:cvrp}), elements are customers, blocks correspond to days, and each block contains the customers to be visited on that day. A subpath is a daily vehicle route satisfying capacity and routing constraints, and a path is a multi-day schedule subject to a maximum total distance. In the RCSP (Example~\ref{ex:rcsp}), elements are tasks, and each block corresponds to a timetable scenario. A subpath is a duty covering tasks within a single scenario subject to labor rules, while a path is a template coupling one duty per scenario subject to the maximum template length constraint.

We now provide a general path-based mathematical formulation for the problem. Let $\mathcal{P}$ denote the set of resource-feasible paths in $G$. For each path $\pi \in \mathcal{P}$, let $c_\pi$ denote its cost and $a_{\pi k}\in\{0,1\}$ indicate whether element $k \in K$ is covered by $\pi$. We define the binary decision variable $x_\pi$ encoding whether path $\pi$ is selected. A set covering formulation of the path problem is given by:
\begin{subequations}
\begin{align}
\min \quad & \sum_{\pi \in \mathcal{P}} c_\pi x_\pi & \\ 
\text{s.t.} \quad & \sum_{\pi \in \mathcal{P}} a_{\pi k} x_\pi \geq 1 
    && \forall k \in K \label{eq:master_cover} \\ 
& x_{\pi} \in \{0, 1\} && \forall \pi \in \mathcal{P}.
\end{align}
\label{eq:master_ip}%
\end{subequations}
Without loss of generality, the set covering constraints can be replaced by partitioning constraints, and the model can be augmented with side constraints. 

\subsection{Resource Constraints}
\label{subsec:problem_resource}

We now formalize the notion of resource feasibility, referring to \citet{irnich2008resource} for a detailed introduction to resource constraints and resource extension functions. For ease of exposition, we first present the standard case in which every resource is one-dimensional and admissible values form an interval, and generalize it to multi-dimensional resources thereafter.

\subsubsection*{Single-dimensional resource constraints.}
Each resource $r\in R$ takes integer values, and tracks a quantity that accumulates along a path subject to bounds at each node. Typical examples include vehicle load, battery charge, or elapsed time. The resource consumption along a path is governed through a resource extension function $f_{a}(\cdot): \mbb{Z}^{R} \rightarrow \mbb{Z}^{R}$ for each arc $a\in A$, which specifies how a resource value progresses along the arc. Every node $k \in \{s, t\} \cup K$ specifies an interval $[\ell_k^r,u_k^r]$ for each resource $r$. Without loss of generality, we assume that $\ell_s^r=u_s^r=0$ for all $r$.

Any path $\pi = (s = v_0, v_1, \dots, v_{p-1}, v_p = t)$ in $G$ traversing arcs $a_1, \dots, a_{p}$ gives rise to a sequence of resource vectors $(\mathbf{T}_{0}, \mathbf{T}_{1}, \dots, \mathbf{T}_{p})$, which we call the resource trajectory of $\pi$. This trajectory is generated by applying the resource extension functions sequentially along the arcs of $\pi$:
\begin{equation}
    \mathbf{T}_{0} = \mathbf{0}, \qquad \mathbf{T}_{m} = f_{a_m}(\mathbf{T}_{m-1}) \quad \text{for } m = 1, \dots, p.
\end{equation}
Writing $T^r_m$ for the $r$-th entry of $\mathbf{T}_m$, we say that $\pi$ is \emph{$R$-feasible} when $T^r_m \in [\ell^r_{v_m}, u^r_{v_m}]$ for all $r \in R$ and all $m = 0, \dots, p$. Let $\mathcal{P}(G, R)$ denote the set of $R$-feasible paths in $G$, and write $\mathcal{P} = \mathcal{P}(G, R)$ for short. We are now ready to formalize the notion of subpath resources.

\begin{defin}[Subpath resources]
\label{def:subpath_resource}
Resource $r \in R$ is a subpath resource of block $i$ if (i) $[\ell^r_k, u^r_k] = (-\infty, \infty)$ for all $k \notin \{s\} \cup K_i$ and (ii) $(f_{a}(\mathbf{T}))^r = T^{r}$ for all $a \notin A_i$.
\end{defin}

Condition~(i) states that subpath resources impose no constraints outside their own block, and Condition~(ii) specifies that their consumption is unaffected by arcs outside that block.

As stated before, the resource set $R$ can be partitioned into subpath resources and path resources. In particular, we write $R = \bigcup\limits_{i=0}^{n} R_i$, where $R_0$ is the set of path resources and $R_i$ is the set of subpath resources for block $K_i$. A resource is a path resource when it is not a subpath resource.

This partitioning induces a corresponding decomposition of feasible paths. By Definition~\ref{def:subpath_resource}, any feasible path can be decomposed by block: once a subpath satisfies the subpath resources of its block, the subpath constraints can be fully discarded when concatenating subpaths into a path. We denote by $G_i = (K_i, A_i)$ the subgraph induced by block $K_i$ and write $\mathcal{S}_i$ for the set of feasible subpaths in $G_i$ with respect to $R_i$. We assume that $\mathcal{S}_i$ is finite for all $i = 1, \dots, n$.

For any subpath $\sigma = (k_0, k_1, \dots, k_q) \in \mathcal{S}_i$ traversing arcs $a_1, \dots, a_q$, we define the resource trajectory of $\sigma$ recursively by
\begin{equation}
    \mathbf{T}_{0} = \mathbf{0}, \qquad \mathbf{T}_{m} = f_{a_m}(\mathbf{T}_{m-1}) \quad \text{for } m = 1, \dots, q.
\end{equation}
We call $t^{r}_{\sigma} = T^{r}_q$ the \emph{contribution} of $\sigma$ to resource $r\in R$. Collecting the contributions to all path resources, we write $\mathbf{t}^{R_0}_{\sigma} = \{t^{r}_{\sigma}:r \in R_0\} \in \mbb{Z}^{R_0}$ for the \emph{path-resource consumption vector} of $\sigma$.

We impose two structural properties on the path resources that our algorithm will exploit.

\begin{assump}[Path resources]
\label{assump:path_resources}
Every path resource $r \in R_0$ satisfies the following properties:

\noindent (i) \emph{Downward closure.} For $k \in \{s, t\} \cup K$, the admissible interval is unbounded below: $\ell^r_k=-\infty$.

\noindent (ii) \emph{Monotone separable aggregation.} For every block $i = 1, \dots, n$, there exists an aggregation function $F^{r}_{i}: \mbb{Z}^{i} \rightarrow \mbb{Z}$, non-decreasing in each argument, such that the value of resource $r$ at the end of block $i$ along any path $\pi = (s, \sigma_1, \dots, \sigma_n, t)$ equals $F^{r}_{i}(t^{r}_{\sigma_1}, \dots, t^{r}_{\sigma_i})$.
\end{assump}

It follows that reducing the contribution of a subpath to a resource $r \in R_0$ can never render a feasible path infeasible; equivalently, underestimating the resource consumption of each subpath yields a valid relaxation of the pricing problem. These properties are crucial for establishing valid upper and lower bounds on the minimum reduced cost throughout our pricing algorithm. This assumption is also commonly satisfied in practical applications. In the B-MPCVRP for instance, the sole path resource tracks cumulative route distance, which is non-decreasing across days, and the distance restriction merely imposes an upper bound on the total distance.

\subsubsection*{Multi-dimensional resource constraints.}

To capture more general applications, we allow resources to take values in $\mbb{Z}^{d_r}$ with $d_r > 1$ with broader restrictions than intervals. For example, in the RCSP, the template length is governed by the difference between the latest end time and the earliest start time across duties, which requires a two-dimensional representation of path-based resources to track both endpoints. The corresponding set of admissible values is defined by a single linear inequality and is not a Cartesian product of intervals, as illustrated in Figure~\ref{fig:rcsp_feasible_set}. To accommodate such cases, we now extend the framework above to multi-dimensional resources.


\begin{figure}[htbp!]
\centering
\begin{tikzpicture}[scale=1.5]

  \fill[medBlue!20]
    (0, 0) -- (3.40, 0) -- (3.40, 3.40) -- (1.90, 3.40) -- (0, 1.50) -- cycle;

  \draw[darkBlue!75, line width=1.2pt]
    (0, 1.50) -- (1.90, 3.40);
  \node[font=\scriptsize\sffamily, darkBlue, right=3pt] at (1.00, 2.50) {$e - s = L$};

  \draw[->, >=stealth, black, line width=0.6pt]
    (0, 0) -- (3.40, 0)
    node[right, font=\scriptsize\sffamily] {$s$};
  \draw[->, >=stealth, black, line width=0.6pt]
    (0, 0) -- (0, 3.40)
    node[above, font=\scriptsize\sffamily] {$e$};

  \foreach \x in {0.75, 1.50, 2.25, 3.0} {
    \draw[line width=0.5pt] (\x, 0.05) -- (\x, -0.05);
  }

  \foreach \y in {0.75, 1.50, 2.25, 3.0} {
    \draw[line width=0.5pt] (0.05, \y) -- (-0.05, \y);
  }

  \node[font=\scriptsize\sffamily, darkBlue] at (1.80, 1.20) {$\mathcal{F}$};

\end{tikzpicture}
\caption{Feasible set $\mathcal{F} = \{(s_\sigma, e_\sigma) : e_\sigma - s_\sigma \leq L\}$ of duty contribution vectors for the RCSP path resource. The admissible region (shaded) is bounded above by the line $e_\sigma - s_\sigma = L$.}
\label{fig:rcsp_feasible_set}
\end{figure}

Each resource $r \in R$ has dimension $d_r \in \mbb{Z}_{\geq 1}$ and takes values in $\mbb{Z}^{d_r}$. The joint resource space becomes $\mbb{Z}^{R} = \prod_{r \in R} \mbb{Z}^{d_r}$, endowed with the componentwise order~$\preceq$. The resource extension functions and resource trajectories are defined exactly as before. Since the per-resource intervals $[\ell^r_k, u^r_k]$ no longer suffice to describe admissibility, we associate with each node $k \in \{s, t\} \cup K$ an \emph{admissible set} $\mathcal{F}_k \subseteq \mbb{Z}^{R}$, and say that a path is $R$-feasible when $\mathbf{T}_m \in \mathcal{F}_{v_m}$ for all $m$. We define $\mathbf{F}^{r}_k \subseteq \mbb{Z}^{d_r}$ as its projection onto resource $r$. The notion of subpath resources extends accordingly, with condition~(i) of Definition~\ref{def:subpath_resource} modified as $\mathbf{F}^{r}_k = \mbb{Z}^{d_r}$. The contribution vector $\mathbf{t}^{r}_{\sigma}$ now lies in $\mbb{Z}^{d_r}$.

The two structural properties of Assumption~\ref{assump:path_resources} generalize naturally as follows. Downward closure now requires the admissible projection $\mathbf{F}^{r}_k$ to be downward-closed with respect to the componentwise order, meaning that $\mathbf{t}' \in \mathbf{F}^{r}_k$ whenever $\mathbf{t}' \preceq \mathbf{T}$ for some $\mathbf{T} \in \mathbf{F}^{r}_k$. The aggregation function $F^{r}_{i}: (\mbb{Z}^{d_r})^{i} \rightarrow \mbb{Z}^{d_r}$ is required to be componentwise non-decreasing in each argument.

We verify that the RCSP (Example~\ref{ex:rcsp}) satisfies the generalized assumption. Each duty contributes a pair consisting of its negated start time and its end time. The aggregation across a sequence of duties takes the componentwise maximum, and both components are non-decreasing in each duty's contribution. The admissible set is defined by the template length constraint $\text{end} - \text{start} \leq L$, which is downward-closed with respect to the componentwise order: reducing any entry of a duty's contribution vector, i.e., starting later or ending earlier, can only decrease the template length.

\subsection{Column Generation Benchmark}
\label{subsec:problem_benchmark}

\paragraph{Preliminaries.}
Due to the large number of path variables, column generation is the method of choice for solving model~\eqref{eq:master_ip}. The restricted master problem (RMP) is defined as the problem's linear relaxation over a subset of paths $\mathcal{P}'\subseteq\mathcal{P}$:
\begin{subequations}
\begin{align}
\min \quad & \sum_{\pi \in \mathcal{P}'} c_\pi x_\pi & \\ 
\text{s.t.} \quad & \sum_{\pi \in \mathcal{P}'} a_{\pi k} x_\pi \geq 1 && \forall k \in K \label{eq:rmp_cover} \\ 
& x_{\pi} \geq 0 && \forall \pi \in \mathcal{P}'.
\end{align}
\label{eq:rmp}%
\end{subequations}
In each column generation iteration, upon solving the RMP, the pricing problem seeks a variable of negative reduced cost or a certificate that none exists. In the former case, the algorithm repeats. In the latter case, the incumbent solution to the RMP is provably optimal for the linear programming (LP) relaxation of model~\eqref{eq:master_ip}. An integral solution can be obtained by embedding column generation into a branch-price-and-cut algorithm or via primal heuristics \citep{desrosiers2026branch}.

Denoting by $\mathbf{\lambda}$ the dual variables of constraints~\eqref{eq:rmp_cover}, the reduced cost of path $\pi \in \mathcal{P}$ reads 
\begin{align}
    \Tilde{c}_{\pi} = c_\pi - \sum_{k \in \pi} \lambda_k. \label{eq:reduced_cost_path}%
\end{align} 
Since path costs are sum of arc costs, the reduced cost of a path can be fully decomposed over its arcs. The reduced cost of a subpath is defined analogously.

\paragraph{Path-based benchmark.}
A natural benchmark in the pricing problem involves generating a path-based variable by applying a labeling algorithm directly on the element graph $G$ and simultaneously tracking subpath and path resources along path extensions. This approach fails to exploit the nested structure of the problem: labels must track the full resource vector at every node, and the resulting state space grows with the combined dimension of all subpath and path resources. 

\begin{figure}[htbp]
\centering
\begin{tikzpicture}

  \SourceSink{-2.5}{9.7}

  \Block{0}{0}{K1}{K_1}
  \Subpath{p1}{(-0.80, 1.85)}
  \Subpath{p2}{( 0.50, 1.20)}
  \Subpath{p3}{( 0.60, 2.10)}
  \Subpath{p4}{(-0.55,-0.55)}
  \Subpath{p5}{( 0.55,-1.70)}
  \Subpath{p6}{(-0.60,-0.90)}

  \Block{3.6}{0}{K2}{K_2}
  \Subpath{q1}{(2.85, 2.05)}
  \Subpath{q2}{(4.05, 1.70)}
  \Subpath{q3}{(2.95, 0.85)}
  \Subpath{q4}{(3.95, 0.20)}
  \Subpath{q5}{(2.90,-0.65)}
  \Subpath{q6}{(4.05,-1.40)}
  \Subpath{q7}{(3.00,-2.00)}

  \Block{7.2}{0}{K3}{K_3}
  \Subpath{r1}{(6.50, 1.95)}
  \Subpath{r2}{(7.75, 0.80)}
  \Subpath{r3}{(6.45,-0.80)}
  \Subpath{r4}{(7.70,-1.65)}
  \Subpath{r5}{(6.55, 1.50)}
  \Subpath{r6}{(7.50,-1.15)}

  \foreach \i in {p1,p2,p3,p4,p5,p6} {
    \draw[grarc] (s) -- (\i); }

  \foreach \i in {p1,p2,p3,p4,p5,p6} {
    \foreach \j in {q1,q2,q3,q4,q5,q6,q7} {
      \draw[grarc] (\i) -- (\j); } }

  \foreach \i in {q1,q2,q3,q4,q5,q6,q7} {
    \foreach \j in {r1,r2,r3,r4,r5,r6} {
      \draw[grarc] (\i) -- (\j); } }

  \foreach \i in {r1,r2,r3,r4,r5,r6} {
    \draw[grarc] (\i) -- (t); }

  \draw[patharc] (s)  -- (p2);
  \draw[patharc] (p2) -- (q5);
  \draw[patharc] (q5) -- (r1);
  \draw[patharc] (r1) -- (t);

  \draw[medBlue, line width=0.4pt] (6.50, 1.92) -- (5.75, 0.90);
  \draw[medBlue, line width=0.4pt] (6.50, 1.92) -- (7.25, 0.90);
  \draw[fill=white, draw=darkBlue, rounded corners=3pt, line width=0.5pt]
    (5.73, 0.20) rectangle (7.27, 0.92);
  \Elem{z1}{5.93}{0.62}   
  \Elem{z2}{6.73}{0.38}   
  \Elem{z3}{7.10}{0.65}   
  \draw[zoomarc] (z1) -- (z2);
  \draw[zoomarc] (z2) -- (z3);

  \foreach \i in {p1,p2,p3,p4,p5,p6,q1,q2,q3,q4,q5,q6,q7,r1,r2,r3,r4,r5,r6} {
    \node[circle, draw=medBlue, fill=white, inner sep=0pt,
          minimum size=6pt] at (\i) {};
  }

\draw[brace2, bgray600, thick]
  ([yshift=-70pt]t.south) -- ([yshift=-70pt]s.south)
  node[midway, below=14pt, font=\small, bgray800]
  {Path resources};

\end{tikzpicture}
\caption{Subpath graph corresponding to the element graph of Figure~\ref{fig:element_graph}. The zoom box illustrates that every node in this graph corresponds to a subpath in the element graph.}
\label{fig:subpath_graph}
\end{figure}

\paragraph{Enumerative benchmark.}
We define a state-of-the-art benchmark based on the two-stage decomposition from \citet{dohn2013branch, jacquillat2024subpath}. Rather than generating paths from individual elements, this approach instead exploits the nested problem structure by applying a labeling algorithm over a \emph{subpath graph} $H = (\{s, t\} \cup \mathcal{S}, E)$, illustrated in Figure~\ref{fig:subpath_graph}. In this graph, each node corresponds to a feasible subpath $\sigma \in \mathcal{S} = \bigcup_{i=1}^{n} \mathcal{S}_i$. By full connectivity, an arc $(\sigma, \sigma') \in E$ exists whenever $\sigma \in \mathcal{S}_i$ and $\sigma' \in \mathcal{S}_{i+1}$ for some $i$, i.e., any two subpaths in consecutive blocks can be concatenated. Every resource-feasible path in $G$ corresponds to a path in $H$, and vice versa. The subpath graph thus provides an equivalent representation of the problem. 

The enumerative benchmark proceeds in two steps. First, it enumerates all non-dominated subpaths in $\mathcal{S}_i$, for each block $i = 1, \dots, n$. A subpath $\sigma \in \mathcal{S}_i$ is dominated when there exists another subpath $\sigma' \in \mathcal{S}_i$ with lesser or equal reduced cost and lesser or equal consumption of every path resource in $R_0$, with at least one inequality strict. Second, it applies a labeling algorithm on the subgraph of $H$ induced by all non-dominated subpaths, where only path resources $R_0$ need be tracked. Whereas this approach decomposes the pricing problem across subpaths, it can still be computationally intensive in large-scale instances with weak domination patterns across subpaths---leading to long enumeration times in the first step and long labeling times in the second step. We present next an adaptive partitioning pricing algorithm to address these issues, and compare it to the enumerative benchmark in Sections~\ref{sec:cvrp} and~\ref{sec:crew}.

\section{Adaptive Partitioning Algorithm}
\label{sec:bucketPricing}

We now present an adaptive bucket-based partitioning algorithm, which solves the pricing problem by working with a coarsened representation of the subpath graph $H$. We present the algorithm in full generality, using the multi-dimensional notation from Section~\ref{subsec:problem_resource}.

The algorithm relies on two key concepts. First, we partition the subpaths in each block into \emph{buckets} defined by resource intervals. Each bucket is represented by its minimum reduced cost subpath, called the \emph{representative} for short. We construct a coarse \emph{bucket graph} connecting all representatives toward forming full paths. Pricing proceeds on this bucket graph by iterating between pessimistic and optimistic pricing steps, which together maintain valid upper and lower bounds on the minimum reduced cost. Since the bucket graph is a coarse representation of the full subpath graph, pricing on the bucket graph remains more tractable than pricing directly on $H$. Second, an adaptive bucket refinement approach ensures that these bounds converge within a finite number of iterations. Refinement decisions are guided by the current dual information to split buckets along ``promising'' paths based on the incumbent dual variables. As a result, the algorithm focuses its efforts on regions of the subpath space that are most relevant for the current RMP solution. Figure~\ref{fig:flowchart} provides an overview of the algorithm.

\begin{figure}[htbp!]
\centering

\begin{tikzpicture}[node distance=0.5cm,
  terminal/.style={rectangle, rounded corners=4pt,
    minimum width=2.5cm, align=center,
    draw=bgray600, fill=bgray50,
    text=bgray800,
    font=\small\bfseries,
    line width=0.5pt},
  process/.style={rectangle, rounded corners=4pt,
    minimum width=2.5cm, align=center,
    draw=darkBlue, fill=medBlue!20,
    text=darkBlue,
    font=\small,
    line width=0.5pt},
  arrow/.style={->, >=stealth, bgray600, line width=0.6pt},
  lbl/.style={font=\scriptsize, bgray600},
]
  \node (init)      [terminal]                              {Bucket partition $\mathcal{B}$};
  \node (solvermp)  [process,  below=of init]               {Solve RMP};
  \node (rep)       [process,  below=of solvermp]           {Compute bucket\\ representatives};
  \node (pess)      [process,  below=of rep]                {Pessimistic pricing\\ $\tilde{c}^{*}_{pes}(\mathcal{B}) < 0$?};
  \node (addrmp)    [process,  left=of pess, xshift=-1.2cm] {Add path\\ to RMP};
  \node (opt)       [process,  below=of pess]               {Optimistic pricing\\ $\tilde{c}^{*}_{opt}(\mathcal{B}) < 0$?};
  \node (refine)    [process,  right=of opt,  xshift=1.2cm] {Refine $\mathcal{B}$};
  \node (terminate) [terminal, below=of opt]                {RMP is optimal};

  \draw[arrow] (init)         -- (solvermp);
  \draw[arrow] (solvermp)     -- node[lbl, anchor=east]  {Duals $\mb{\lambda}$} (rep);
  \draw[arrow] (rep)          -- (pess);
  \draw[arrow] (pess.west)    -- node[lbl, anchor=south] {Yes} (addrmp.east);
  \draw[arrow] (addrmp.north) |- (solvermp.west);
  \draw[arrow] (pess.south)   -- node[lbl, anchor=east]  {No}  (opt.north);
  \draw[arrow] (opt.east)     -- node[lbl, anchor=south] {Yes} (refine.west);
  \draw[arrow] (refine.north) |- (rep.east);
  \draw[arrow] (opt.south)    -- node[lbl, anchor=east]  {No}  (terminate.north);
\end{tikzpicture}
\caption{Overview of adaptive bucket-based pricing.}
\label{fig:flowchart}
\end{figure}

We formalize the notions of bucket and representative in Section~\ref{subsec:bucket}, and introduce the bucket graph in Section~\ref{subsec:bucket_graph}. We describe the pessimistic and optimistic pricing steps in Section~\ref{subsec:bucket_pricing}, and the refinement procedure in Section~\ref{subsec:bucket_refinement}. We prove convergence results in Section~\ref{subsec:bucket_convergence}. We discuss design choices and acceleration techniques in Section~\ref{subsec:bucket_design}, and compatibility with common branch-price-and-cut approaches in Section~\ref{subsec:bucket_extensions}.

\subsection{Buckets and Representatives}
\label{subsec:bucket}

We use interval-based buckets to partition the set of feasible subpaths per block along their path-resource consumption profiles. In particular, we partition $\mbb{Z}^{R_0}$ into a finite set of buckets $\mathcal{B}_i$ for each block $K_i$, where each bucket $B \in \mathcal{B}_i$ is a resource interval $B = [\mathbf{l}_{B}, \mathbf{u}_{B}] \subseteq \mbb{Z}^{R_0}$. Note that these intervals are designed to define bucket restrictions within the algorithm, as opposed to the admissible sets defined in Section~\ref{sec:problem} to define $R$-feasibility in the problem. The interval implicitly defines a set of allowed subpaths in block $i$ whose path-resource consumption vector lies in $B$, i.e.:
\begin{align}
    \mathcal{S}_{B} = \{ \sigma \in \mathcal{S}_i : \mathbf{t}^{R_0}_{\sigma} \in B \}
\end{align}
Among all allowed subpaths, we select the bucket representative $\sigma(B)$ with minimum reduced cost:
\begin{align}
    \sigma(B) = \arg \min_{\sigma \in \mathcal{S}_{B}} \tilde{c}_\sigma,
\end{align}
where the reduced cost of a subpath is defined analogously to that of a regular path. We assume that efficient, problem-specific algorithms for determining bucket representatives are available. 

\begin{figure}[h!]
\centering
\begin{subfigure}[c]{0.48\textwidth}
\centering
\begin{tikzpicture}
  \draw[->, >=stealth, black, line width=0.6pt]
    (-1.6,-2.50) -- (-1.6, 2.70)
    node[above, font=\scriptsize\sffamily, align=center] {Path\\resource};
  \Block{0}{0}{K1}{K_1}
  \Subpath{pa}{(-0.50, 1.85)}
  \Subpath{pb}{( 0.30, 1.20)}
  \Subpath{pc}{( 0.40, 2.10)}
  \Subpath{pd}{(-0.35,-0.55)}
  \Subpath{pe}{( 0.35,-1.70)}
  \Subpath{pf}{(-0.40,-0.90)}
  \foreach \x/\y in {-0.50/1.85, 0.30/1.20, 0.40/2.10,
                     -0.35/-0.55, 0.35/-1.70, -0.40/-0.90} {
    \draw[medBlue, dotted, line width=0.5pt, shorten >=3pt]
      (-1.6, \y) -- (\x, \y);
  }
\end{tikzpicture}
\caption{Subpaths along path resource axis.}
\end{subfigure}%
\hfill
\begin{subfigure}[c]{0.48\textwidth}
\centering
\begin{tikzpicture}
  \draw[->, >=stealth, black, line width=0.6pt]
    (-1.6,-2.50) -- (-1.6, 2.70)
    node[above, font=\scriptsize\sffamily, align=center] {Path\\resource};
  \Block{0}{0}{K1}{K_1}
  \node[bkt, minimum width=2.0cm, minimum height=2.25cm] (b1box) at (0, 1.175) {};
  \node[font=\scriptsize\sffamily, darkBlue, anchor=south east, inner sep=3pt]
    at (b1box.south east) {$B_1$};
  \Subpath{qa}{(-0.50, 1.85)}
  \Subpath{qc}{( 0.40, 2.10)}
  \Rep{qB1}{( 0.30, 1.20)}{}
  \node[bkt, minimum width=2.0cm, minimum height=2.25cm] (b2box) at (0,-1.175) {};
  \node[font=\scriptsize\sffamily, darkBlue, anchor=south east, inner sep=3pt]
    at (b2box.south east) {$B_2$};
  \Subpath{qd}{(-0.35,-0.55)}
  \Subpath{qf}{( 0.35,-1.70)}
  \Rep{qB2}{(-0.40,-0.90)}{}
  \draw[medBlue, dotted, line width=0.5pt, shorten >=3pt]
    (-1.6, 1.20) -- (0.30, 1.20);
  \node[font=\scriptsize\sffamily, darkBlue, left=2pt] at (-1.6, 1.20) {$\mb{t}^{R_0}_{\sigma(B_1)}$};
  \draw[medBlue, dotted, line width=0.5pt, shorten >=3pt]
    (-1.6,-0.90) -- (-0.4,-0.90);
  \node[font=\scriptsize\sffamily, darkBlue, left=2pt] at (-1.6,-0.90) {$\mb{t}^{R_0}_{\sigma(B_2)}$};
  \draw[medBlue, dotted, line width=0.5pt] (-1.6, 2.30) -- (-1.1, 2.30);
  \node[font=\scriptsize\sffamily, darkBlue, left=2pt] at (-1.6, 2.30) {$\mb{u}_{B_1}$};
  \draw[medBlue, dotted, line width=0.5pt] (-1.6, 0.05) -- (-1.1, 0.05);
  \node[font=\scriptsize\sffamily, darkBlue, left=2pt] at (-1.6, 0.05) {$\mb{l}_{B_1}$};
  \draw[medBlue, dotted, line width=0.5pt] (-1.6,-0.05) -- (-1.1,-0.05);
  \draw[medBlue, dotted, line width=0.5pt] (-1.6,-2.30) -- (-1.1,-2.30);
  \node[font=\scriptsize\sffamily, darkBlue, left=2pt] at (-1.6,-2.30) {$\mb{l}_{B_2}$};
  \node[font=\scriptsize\sffamily, darkBlue, below=2pt] at (qB1) {$\sigma(B_1)$};
  \node[font=\scriptsize\sffamily, darkBlue, below=2pt] at (qB2) {$\sigma(B_2)$};
\end{tikzpicture}
\caption{Subpaths partitioned into two buckets.}
\end{subfigure}
\caption{Bucket partitioning of subpaths in block $K_1$.}
\label{fig:bucket}
\end{figure}

Figure~\ref{fig:bucket} illustrates how the subpaths of block $K_1$ are partitioned into buckets $B_1$ and $B_2$ based on their path-resource consumption. From the definitions above, $[\mathbf{l}_B, \mathbf{u}_B]$ denotes the resource interval of bucket $B$, the representative $\sigma(B)$ is the minimum reduced cost subpath in $B$, and {$\mathbf{t}^{R_0}_{\sigma(B)}$} denotes the corresponding path-resource consumption vector.

We illustrate the notion of buckets and representatives on the two motivating examples. In the B-MPCVRP (Example~\ref{ex:cvrp}), the path resource tracks cumulative route distance. We partition the routes of each day into one-dimensional buckets, grouping routes whose distance falls within the same interval. The representative is the minimum reduced cost route whose distance is contained in the interval. Finding the representative involves an $\mathcal{NP}$-hard resource-constrained shortest path problem, which can be solved using bidirectional labeling \citep{righini2006symmetry}.

In the RCSP (Example~\ref{ex:rcsp}), the path resource tracks the template length, defined as the difference between end and start time of each duty. We partition duties into two-dimensional buckets specifying allowed start and end time intervals. The representative $\sigma(B)$ is the minimum reduced cost duty respecting these intervals. Finding a representative duty is similar to the pricing problem for a standard crew scheduling problem and can be solved in polynomial time \Citep{rossum2022fast}.

\subsection{Bucket Graph}
\label{subsec:bucket_graph}

The bucket graph is a coarsened representation of the subpath graph $H$, obtained by replacing each bucket's subpaths with a single representative. Specifically, given a bucket partitioning $\mathcal{B} = (\mathcal{B}_1, \dots, \mathcal{B}_n)$, let $\mathcal{S}_{\mathcal{B}} = \bigcup_{B \in \mathcal{B}} \{ \sigma(B) \}$ denote the set of all bucket representatives. We define the bucket graph $H_{\mathcal{B}} = (\{s, t\} \cup \mathcal{S}_{\mathcal{B}}, E_{\mathcal{B}})$ as the graph on these representatives, where an arc $(\sigma(B), \sigma(B')) \in E_{\mathcal{B}}$ exists whenever $B \in \mathcal{B}_i$ and $B' \in \mathcal{B}_{i+1}$ for some $i = 1, \dots, n-1$. By full connectivity of consecutive blocks, all such pairs of representatives can be concatenated. Figure~\ref{fig:bucket_graph} illustrates the bucket graph corresponding to partitioning $\mathcal{B} = (\mathcal{B}_1, \mathcal{B}_2, \mathcal{B}_3)$ with $\mathcal{B}_1 = (B_1, B_2)$, $\mathcal{B}_2 = (B_3, B_4)$, and $\mathcal{B}_3 = (B_5, B_6)$. By design, the bucket graph is much sparser than the original subpath graph of Figure~\ref{fig:subpath_graph}. Yet, the more granular the bucket partition, the closer $H_{\mathcal{B}}$ gets to the full subpath graph $H$. 

\begin{figure}[h!]
\centering
\begin{tikzpicture}
  \SourceSink{-2.5}{9.7}
  \Block{0}{0}{K1}{K_1}
  \node[bkt, minimum width=2.0cm, minimum height=2.25cm] (a1box) at (0, 1.175) {};
  \node[font=\scriptsize\sffamily, medBlue, anchor=south east, inner sep=3pt]
    at (a1box.south east) {$B_1$};
  \Subpath{pa1}{(-0.50, 1.85)}
  \Rep{qa1}{( 0.30, 1.20)}{}
  \Subpath{pa3}{( 0.40, 2.10)}
  \node[bkt, minimum width=2.0cm, minimum height=2.25cm] (a2box) at (0,-1.175) {};
  \node[font=\scriptsize\sffamily, medBlue, anchor=south east, inner sep=3pt]
    at (a2box.south east) {$B_2$};
  \Subpath{pa4}{(-0.35,-0.55)}
  \Subpath{pa5}{( 0.35,-1.70)}
  \Rep{qa2}{(-0.40,-0.90)}{}
  \Block{3.6}{0}{K2}{K_2}
  \node[bkt, minimum width=2.0cm, minimum height=2.25cm] (b1box) at (3.6, 1.175) {};
  \node[font=\scriptsize\sffamily, medBlue, anchor=south east, inner sep=3pt]
    at (b1box.south east) {$B_3$};
  \Rep{qb1}{(3.15, 2.05)}{}
  \Subpath{pb2}{(4.15, 1.70)}
  \Subpath{pb3}{(3.25, 0.85)}
  \Subpath{pb4}{(4.05, 0.20)}
  \node[bkt, minimum width=2.0cm, minimum height=2.25cm] (b2box) at (3.6,-1.175) {};
  \node[font=\scriptsize\sffamily, medBlue, anchor=south east, inner sep=3pt]
    at (b2box.south east) {$B_4$};
  \Subpath{pb5}{(3.20,-0.65)}
  \Subpath{pb6}{(4.15,-1.40)}
  \Rep{qb2}{(3.30,-2.00)}{}
  \Block{7.2}{0}{K3}{K_3}
  \node[bkt, minimum width=2.0cm, minimum height=2.25cm] (c1box) at (7.2, 1.175) {};
  \node[font=\scriptsize\sffamily, medBlue, anchor=south east, inner sep=3pt]
    at (c1box.south east) {$B_5$};
  \Rep{qc1}{(6.90, 1.95)}{}
  \Subpath{pc2}{(7.75, 0.80)}
  \Subpath{pc3}{(6.95, 1.50)}
  \node[bkt, minimum width=2.0cm, minimum height=2.25cm] (c2box) at (7.2,-1.175) {};
  \node[font=\scriptsize\sffamily, medBlue, anchor=south east, inner sep=3pt]
    at (c2box.south east) {$B_6$};
  \Subpath{pc4}{(6.85,-0.80)}
  \Subpath{pc5}{(7.50,-1.15)}
  \Rep{qc2}{(7.70,-1.65)}{}
  \foreach \i in {qa1,qa2}        { \draw[grarc] (s)  -- (\i); }
  \foreach \i in {qa1,qa2} {
    \foreach \j in {qb1,qb2}      { \draw[grarc] (\i) -- (\j); } }
  \foreach \i in {qb1,qb2} {
    \foreach \j in {qc1,qc2}      { \draw[grarc] (\i) -- (\j); } }
  \foreach \i in {qc1,qc2}        { \draw[grarc] (\i) -- (t); }
\end{tikzpicture}
\caption{Bucket graph for blocks $K_1$, $K_2$, and $K_3$ with two buckets each.
Open circles are subpaths, filled circles are bucket representatives. Inter-block
arcs connect only the bucket representatives.}
\label{fig:bucket_graph}
\end{figure}

\subsection{Pessimistic and Optimistic Pricing}
\label{subsec:bucket_pricing}

Both the pessimistic and optimistic pricing steps apply a labeling algorithm on the bucket graph $H_{\mathcal{B}}$, but differ in how the path resource consumption of each bucket representative is treated. In our implementation, we use bidirectional labeling \citep{righini2006symmetry} for both pricing steps.

In pessimistic pricing, we search for a minimum reduced cost path in $H_{\mathcal{B}}$ using the true resource consumption of each representative. Formally, we solve
\begin{align}
    \tilde{c}^{*}_{\text{pes}}(\mathcal{B}) = \min_{\pi \in \mathcal{P}(H_{\mathcal{B}}, R_0)} \tilde{c}_\pi,
\end{align}
where we recall that $\mathcal{P}(H_{\mathcal{B}}, R_0)$ is the set of $R_0$-feasible paths in $H_{\mathcal{B}}$. Since $H_{\mathcal{B}}$ is restricted to a single representative subpath per bucket, its optimal value forms an upper bound on the true minimum reduced cost over all paths in $\mathcal{P}$. If the optimal value is negative, the corresponding path is added to the RMP. If not, we proceed to optimistic pricing.

In optimistic pricing, we aim to determine whether the failure of pessimistic pricing is due to (i) the coarseness of $H_{\mathcal{B}}$, which excludes subpaths, or (ii) the fact that the current RMP solution is already optimal for the LP relaxation of model~\eqref{eq:master_ip}. To this end, we replace the path-resource consumption vector $\mathbf{t}^{R_0}_{\sigma(B)}$ of each representative $\sigma(B)$ by the lower bound $\mathbf{l}_{B}$, and solve
\begin{align}
    \tilde{c}^{*}_{\text{opt}}(\mathcal{B}) = \min_{\pi \in \mathcal{P}(H_{\mathcal{B}}, R^{\mathcal{B}}_0)} \tilde{c}_\pi,
\end{align}
where $R^{\mathcal{B}}_0$ denotes the resource constraints obtained from $R_0$ by replacing, for each bucket $B \in \mathcal{B}$, the contribution $\mathbf{t}^{r}_{\sigma(B)}$ of $\sigma(B)$ to every path resource $r \in R_0$ by the corresponding bucket lower bound $\mathbf{l}^{r}_{B}$. The value of resource $r$ at the end of any block $i$ along a path $\pi = (s, \sigma(B_1), \dots, \sigma(B_n), t)$ in $\mathcal{P}(H_{\mathcal{B}}, R^{\mathcal{B}}_0)$ then becomes $F^{r}_{i}(\mathbf{l}^{r}_{B_1}, \dots, \mathbf{l}^{r}_{B_i})$ rather than $F^{r}_{i}(\mathbf{t}^{r}_{\sigma(B_1)}, \dots, \mathbf{t}^{r}_{\sigma(B_i)})$. In the B-MPCVRP (Example~\ref{ex:cvrp}), optimistic pricing underestimates the distance of every daily route as its bucket's lower bound. In the RCSP (Example~\ref{ex:rcsp}), optimistic pricing computes a template length considering the latest admissible start time and the earliest admissible end time within each bucket (which amounts to underestimating each resource dimension: end time and minus start time).

By Assumption~\ref{assump:path_resources}, underestimating the resource consumption of each representative can only expand the feasible set: $F^{r}_{i}(\mathbf{l}^{r}_{B_1}, \dots, \mathbf{l}^{r}_{B_i}) \preceq F^{r}_{i}(\mathbf{t}^{r}_{\sigma(B_1)}, \dots, \mathbf{t}^{r}_{\sigma(B_i)})$, and feasibility of the latter vector implies feasibility of the former. Therefore, optimistic pricing returns a lower bound on the true minimum reduced cost over all paths in $\mathcal{P}$. If this lower bound is nonnegative, no negative reduced cost column exists and the column generation algorithm terminates. Otherwise, the optimistic path identifies a sequence of buckets along which a feasible negative reduced cost path \emph{may} exist, and we proceed to the refinement step. In this sense, the optimistic path directs the refinement toward the part of the bucket graph where a negative reduced cost column is most likely to be found.

From the above discussion, it is clear that the partitioning scheme maintains valid lower and upper bounds on the minimum reduced cost.
\begin{prop}[Reduced cost bounds]
\label{prop:rc_bounds}
For any bucket partitioning $\mathcal{B}$, it holds that
\begin{align}
    \tilde{c}_{\emph{opt}}(\mathcal{B}) \leq \min_{\pi \in \mathcal{P}(G, R)} \tilde{c}_{\pi} \leq \tilde{c}_{\emph{pes}}(\mathcal{B}).
\end{align}
\end{prop}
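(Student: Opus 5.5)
The plan is to prove the two inequalities separately, using the correspondence between $R$-feasible paths in $G$ and $R_0$-feasible paths in the subpath graph $H$ from Section~\ref{subsec:problem_benchmark}. Write $\tilde{c}^* = \min_{\pi \in \mathcal{P}(G,R)} \tilde{c}_\pi$. Throughout we use that reduced costs decompose additively over arcs. Hence the reduced cost of a path $(s,\sigma_1,\dots,\sigma_n,t)$ equals the sum of the subpath reduced costs plus the reduced costs of the source, inter-block, and sink arcs. We will treat these connecting arcs as identical in $H$, $H_{\mathcal{B}}$, and $G$. If needed, their costs can be charged to the adjacent subpaths so that the path reduced cost is a function of the subpath sequence alone.

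\emph{Upper bound.} We show that $\mathcal{P}(H_{\mathcal{B}}, R_0)$ embeds into $\mathcal{P}(G,R)$ with the same reduced costs. A path in $H_{\mathcal{B}}$ is a sequence $(s,\sigma(B_1),\dots,\sigma(B_n),t)$ with $\sigma(B_i)\in\mathcal{S}_{B_i}\subseteq\mathcal{S}_i$. Each representative is therefore $R_i$-feasible, and by Definition~\ref{def:subpath_resource} the subpath resources impose no constraints outside their block. Feasibility with respect to $R_0$ is evaluated with the true contributions $\mathbf{t}^{R_0}_{\sigma(B_i)}$. Thus the concatenation in $G$ is $R$-feasible and has the same reduced cost. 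Taking minima gives $\tilde{c}^*\le\tilde{c}^*_{\text{pes}}(\mathcal{B})$. If $\mathcal{P}(H_{\mathcal{B}},R_0)$ is empty, we use the convention $\min\emptyset=+\infty$.

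\emph{Lower bound.} Take an optimal $\pi^*=(s,\sigma_1,\dots,\sigma_n,t)\in\mathcal{P}(G,R)$. If none exists, the bound is trivial. Since each $\mathcal{B}_i$ partitions $\mathbb{Z}^{R_0}$, there is a unique $B_i\in\mathcal{B}_i$ with $\mathbf{t}^{R_0}_{\sigma_i}\in B_i$, so $\sigma_i\in\mathcal{S}_{B_i}$. In particular $\mathcal{S}_{B_i}\neq\emptyset$, and $\sigma(B_i)$ exists with $\tilde{c}_{\sigma(B_i)}\le\tilde{c}_{\sigma_i}$. Consider the candidate $\hat\pi=(s,\sigma(B_1),\dots,\sigma(B_n),t)$ in $H_{\mathcal{B}}$. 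By additivity, $\tilde{c}_{\hat\pi}\le\tilde{c}_{\pi^*}$. It remains to show $\hat\pi\in\mathcal{P}(H_{\mathcal{B}},R_0^{\mathcal{B}})$. Under $R_0^{\mathcal{B}}$, the value of each path resource $r$ at the end of block $i$ is $F^r_i(\mathbf{l}^r_{B_1},\dots,\mathbf{l}^r_{B_i})$. Since $\mathbf{l}_{B_j}\preceq\mathbf{t}^{R_0}_{\sigma_j}$ and $F^r_i$ is componentwise non-decreasing (Assumption~\ref{assump:path_resources}(ii)), this value is $\preceq F^r_i(\mathbf{t}^r_{\sigma_1},\dots,\mathbf{t}^r_{\sigma_i})$. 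The latter is the true value along $\pi^*$, which lies in the admissible projection $\mathbf{F}^r_k$ by feasibility of $\pi^*$. Downward closure (Assumption~\ref{assump:path_resources}(i)) then gives feasibility of the former. Hence $\tilde{c}^*_{\text{opt}}(\mathcal{B})\le\tilde{c}_{\hat\pi}\le\tilde{c}^*$.

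\emph{Main obstacle.} The delicate step is the feasibility transfer in the lower bound. Assumption~\ref{assump:path_resources}(ii) only describes resource values at block ends, whereas $R$-feasibility is checked at every node, including intermediate nodes of a subpath. We plan to handle this by interpreting path-resource feasibility in the subpath graph as checks at block boundaries; this is how labeling on $H$ tracks $R_0$. We would also state explicitly the implicit modelling assumption that intra-block path-resource constraints are either vacuous or enforced through subpath feasibility, i.e.\ through membership in $\mathcal{S}_i$. A second point is that admissibility is defined on the joint vector $\mathbb{Z}^R$, but we argue per projection $\mathbf{F}^r_k$. We would assume, as the paper implicitly does, that the admissible set for path resources is the product of its projections, or else apply downward closure to the joint $R_0$-vector directly.
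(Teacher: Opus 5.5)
Your proposal is correct and follows essentially the same argument as the paper. The upper bound holds because every $R_0$-feasible path of representatives in $H_{\mathcal{B}}$ is itself an $R$-feasible path in $G$. The lower bound comes from replacing each subpath of an optimal $\pi^*$ by its bucket's representative and transferring feasibility through $\mathbf{l}_{B_i} \preceq \mathbf{t}^{R_0}_{\sigma_i}$, monotone aggregation, and downward closure.

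The caveats you flag are tacit assumptions that the paper's proof relies on as well, so stating them only makes the argument more careful. They are: path-resource checks only at block boundaries, product structure of the admissible sets, and connecting-arc costs. Those costs can be charged to adjacent subpaths only when they separate by endpoint.
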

\proof{Proof.}
First consider the upper bound. Every path in $\mathcal{P}(H_{\mathcal{B}}, R_0)$ is a feasible path in $\mathcal{P}(G, R)$, since each bucket representative is a feasible subpath by construction and any two representatives from consecutive blocks can be concatenated. Hence $\tilde{c}_{\text{pes}}(\mathcal{B})$ is the reduced cost of a feasible path, and therefore an upper bound on the minimum reduced cost over $\mathcal{P}(G, R)$.

Now consider the lower bound. Let $\pi^* \in \mathcal{P}(G, R)$ be a path of minimum reduced cost, and write $\pi^* = (s, \sigma^*_1, \dots, \sigma^*_n, t)$ as its unique decomposition into subpaths. For each $i = 1, \dots, n$, let $B_i \in \mathcal{B}_i$ be the unique bucket such that $\sigma^*_i \in \mathcal{S}_{B_i}$. By definition of its representative, $\tilde{c}_{\sigma(B_i)} \leq \tilde{c}_{\sigma^*_i}$, so the path $\hat{\pi} = (s, \sigma(B_1), \dots, \sigma(B_n), t)$ satisfies $\tilde{c}_{\hat{\pi}} \leq \tilde{c}_{\pi^*}$. It remains to show that $\hat{\pi}$ is feasible with respect to the relaxed resource constraints $R^{\mathcal{B}}_0$. Since $\sigma^*_i \in \mathcal{S}_{B_i}$, the contribution vector $\mathbf{t}^{r}_{\sigma^*_i}$ of $\sigma^*_i$ to each path resource $r \in R_0$ satisfies $\mathbf{l}^{r}_{B_i} \preceq \mathbf{t}^{r}_{\sigma^*_i}$. By monotone separable aggregation (Assumption~\ref{assump:path_resources}(ii)), it follows that $F^{r}_{i}(\mathbf{l}^{r}_{B_1}, \dots, \mathbf{l}^{r}_{B_i}) \preceq F^{r}_{i}(\mathbf{t}^{r}_{\sigma^*_1}, \dots, \mathbf{t}^{r}_{\sigma^*_i})$ componentwise for every block $i$ and every path resource $r$. Combined with the downward closure of the feasible set (Assumption~\ref{assump:path_resources}(i)), feasibility of $\pi^*$ under $R_0$ implies feasibility of $\hat{\pi}$ under $R^{\mathcal{B}}_0$. Hence $\tilde{c}_{\text{opt}}(\mathcal{B}) \leq \tilde{c}_{\hat{\pi}} \leq \tilde{c}_{\pi^*} = \min_{\pi \in \mathcal{P}(G, R)} \tilde{c}_\pi$. 
\hfill\Halmos

Proposition~\ref{prop:rc_bounds} shows that the pessimistic and optimistic pricing steps yield valid bounds on the true minimum reduced cost at every iteration. In summary, if $\tilde{c}_{\text{opt}}(\mathcal{B}) \geq 0$, no negative reduced cost column exists and column generation terminates; if $\tilde{c}_{\text{pes}}(\mathcal{B}) < 0$, a negative reduced cost column has been found; the remaining case, $\tilde{c}_{\text{opt}}(\mathcal{B}) < 0 \leq \tilde{c}_{\text{pes}}(\mathcal{B})$, triggers refinement.

\subsection{Refinement}
\label{subsec:bucket_refinement}

By construction, the optimistic path relies on underestimated resource consumptions and is not truly feasible with respect to the path resource constraints $R_0$. The goal of refinement is to partition one or more buckets along this path into smaller buckets, thereby eliminating the incumbent optimistic path from $H_{\mathcal{B}}$ and reducing the gap between pessimistic and optimistic pricing.

We formalize the notion of refinement as follows.
\begin{defin}[Refinement]
\label{def:refinement}
Let $\mathcal{B} = (\mathcal{B}_1, \dots, \mathcal{B}_n)$ denote a bucket partitioning. We say $\mathcal{B}' = (\mathcal{B}'_1, \dots, \mathcal{B}'_n)$ is a refinement of $\mathcal{B}$ when, for all $i = 1, \dots, n$, it holds that
\begin{enumerate}[(i)]
    \item $|\mathcal{B}'_i| \geq |\mathcal{B}_i|$, with at least one inequality strict,
    \item $\bigcup\limits_{B' \in \mathcal{B}'_i} B' = \bigcup\limits_{B \in \mathcal{B}_i} B$, and
    \item for every $B' \in \mathcal{B}'_i$ there exists $B \in \mathcal{B}_i$ such 
    that $B' \subseteq B$.
\end{enumerate}
\end{defin}
These conditions state that a refinement strictly increases the number of buckets in at least one block, preserves the union of bucket intervals within each block, and only splits existing buckets without merging them. Note that the refinement procedure is not unique, which leaves flexibility to choose which buckets to split and along which resource dimension(s). Figure~\ref{fig:refinement} shows a possible refinement of the bucket graph in Figure~\ref{fig:bucket_graph}.

\begin{figure}[htbp!]
\centering
\tikzset{
  arclbl/.style={font=\scriptsize, bgray800,
                 fill=white, draw=bgray400, line width=0.4pt,
                 inner sep=2pt, rounded corners=2pt},
}
\begin{tikzpicture}

  \useasboundingbox (-3.2, -3.0) rectangle (11.5, 3.0);
  \SourceSink{-2.5}{9.7}

  \Block{0}{0}{K1}{K_1}
  \node[bkt, minimum width=2.0cm, minimum height=2.25cm] (a1box) at (0, 1.175) {};
  \node[font=\scriptsize\sffamily, darkBlue, anchor=south east, inner sep=3pt]
    at (a1box.south east) {$B_1$};
  \Subpath{pa1a}{(-0.50, 1.85)}
  \Rep{qa1}{( 0.30, 1.20)}{}
  \Subpath{pa1b}{( 0.40, 2.10)}
  \node[bkt, minimum width=2.0cm, minimum height=1.075cm] (a2pbox) at (0,-0.5875) {};
  \node[font=\scriptsize\sffamily, darkBlue, anchor=south east, inner sep=3pt]
    at (a2pbox.south east) {$B_2'$};
  \Subpath{pa2pa}{(-0.35,-0.55)}
  \Rep{qa2p}{(-0.40,-0.90)}{}
  
  \node[bkt, minimum width=2.0cm, minimum height=1.075cm] (a2ppbox) at (0,-1.7625) {};
  \node[font=\scriptsize\sffamily, darkBlue, anchor=south east, inner sep=3pt]
    at (a2ppbox.south east) {$B_2''$};
  \Rep{qa2pp}{( 0.35,-1.70)}{}

  \Block{3.6}{0}{K2}{K_2}
  \node[bkt, minimum width=2.0cm, minimum height=2.25cm] (b1box) at (3.6, 1.175) {};
  \node[font=\scriptsize\sffamily, darkBlue, anchor=south east, inner sep=3pt]
    at (b1box.south east) {$B_3$};
  \Rep{qb1}{(3.15, 2.05)}{}
  \Subpath{pb1a}{(4.15, 1.70)}
  \Subpath{pb1b}{(3.25, 0.85)}
  \Subpath{pb1c}{(4.05, 0.20)}
  \node[bkt, minimum width=2.0cm, minimum height=2.25cm] (b2box) at (3.6,-1.175) {};
  \node[font=\scriptsize\sffamily, darkBlue, anchor=south east, inner sep=3pt]
    at (b2box.south east) {$B_4$};
  \Subpath{pb2a}{(3.20,-0.65)}
  \Subpath{pb2b}{(4.15,-1.40)}
  \Rep{qb2}{(3.30,-2.00)}{}

  \Block{7.2}{0}{K3}{K_3}
  \node[bkt, minimum width=2.0cm, minimum height=1.075cm] (c1pbox) at (7.2, 1.7625) {};
  \node[font=\scriptsize\sffamily, darkBlue, anchor=south east, inner sep=3pt]
    at (c1pbox.south east) {$B_5'$};
  \Rep{qc1p}{(6.90, 1.95)}{}
  \Subpath{pc1pa}{(6.95, 1.50)}
  \node[bkt, minimum width=2.0cm, minimum height=1.075cm] (c1ppbox) at (7.2, 0.5875) {};
  \node[font=\scriptsize\sffamily, darkBlue, anchor=south east, inner sep=3pt]
    at (c1ppbox.south east) {$B_5''$};
  \Rep{qc1pp}{(7.75, 0.80)}{}
  \node[bkt, minimum width=2.0cm, minimum height=2.25cm] (c2box) at (7.2,-1.175) {};
  \node[font=\scriptsize\sffamily, darkBlue, anchor=south east, inner sep=3pt]
    at (c2box.south east) {$B_6$};
  \Subpath{pc2a}{(6.85,-0.80)}
  \Subpath{pc2b}{(7.50,-1.15)}
  \Rep{qc2}{(7.70,-1.65)}{}

  \foreach \i in {qa1,qa2p,qa2pp}      { \draw[grarc] (s)  -- (\i); }
  \foreach \i in {qa1,qa2p,qa2pp} {
    \foreach \j in {qb1,qb2}           { \draw[grarc] (\i) -- (\j); } }
  \foreach \i in {qb1,qb2} {
    \foreach \j in {qc1p,qc1pp,qc2}    { \draw[grarc] (\i) -- (\j); } }
  \foreach \i in {qc1p,qc1pp,qc2}      { \draw[grarc] (\i) -- (t); }

\end{tikzpicture}
\caption{Refined bucket graph after splitting $B_2$ into $B_2'$ and $B_2''$ in $K_1$,and $B_5$ into $B_5'$ and $B_5''$ in $K_3$.}
\label{fig:refinement}
\end{figure}

The reduced cost bounds of Proposition~\ref{prop:rc_bounds} tighten monotonically as a result of refinement.
\begin{prop}[Monotonicity under refinement]
\label{prop:refinement}
Let $\mathcal{B}'$ be a refinement of $\mathcal{B}$. Then
\begin{align}
    \tilde{c}_{\emph{pes}}(\mathcal{B}') \leq \tilde{c}_{\emph{pes}}(\mathcal{B}) \quad \text{ and } \quad \tilde{c}_{\emph{opt}}(\mathcal{B}') \geq \tilde{c}_{\emph{opt}}(\mathcal{B}).
\end{align}
\end{prop}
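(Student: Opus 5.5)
Both inequalities follow from comparing paths in $H_{\mathcal{B}}$ and $H_{\mathcal{B}'}$ block by block, using condition (iii) of Definition~\ref{def:refinement}: each $B'\in\mathcal{B}'_i$ lies inside some $B\in\mathcal{B}_i$. By condition (ii), together with the fact that buckets in $\mathcal{B}'_i$ partition the same region, each $B\in\mathcal{B}_i$ is the disjoint union of the $B'\in\mathcal{B}'_i$ contained in it.

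\emph{Pessimistic bound.} Take an optimal pessimistic path $\pi=(s,\sigma(B_1),\dots,\sigma(B_n),t)\in\mathcal{P}(H_{\mathcal{B}},R_0)$. For each $i$, the representative $\sigma(B_i)$ satisfies $\mathbf{t}^{R_0}_{\sigma(B_i)}\in B_i$. By the covering property it therefore lies in a unique child bucket $B'_i\subseteq B_i$ of $\mathcal{B}'_i$. Since $\mathcal{S}_{B'_i}\subseteq\mathcal{S}_{B_i}$ and $\sigma(B_i)\in\mathcal{S}_{B'_i}$ attains the minimum over the larger set, it is also a minimizer over $\mathcal{S}_{B'_i}$. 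Hence $\sigma(B'_i)$ has the same reduced cost. If ties are broken consistently we may take $\sigma(B'_i)=\sigma(B_i)$, in which case the path $\pi$ itself appears in $H_{\mathcal{B}'}$ with identical true resource consumption and is $R_0$-feasible.

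This tie-breaking issue is the one subtle point. If the representative of $B'_i$ is a different minimizer, its path-resource vector may differ, and $R_0$-feasibility of the substituted path is not automatic. I would handle this by stating that representatives are chosen consistently: the representative of a child bucket is inherited from the parent whenever it lies in the child. This is natural, since refinement only recomputes representatives for split buckets. Under that convention, $\pi\in\mathcal{P}(H_{\mathcal{B}'},R_0)$, and so $\tilde{c}_{\text{pes}}(\mathcal{B}')\le\tilde{c}_{\pi}=\tilde{c}_{\text{pes}}(\mathcal{B})$.

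\emph{Optimistic bound.} This mirrors the lower-bound argument in Proposition~\ref{prop:rc_bounds}. Take an optimal optimistic path $\pi'=(s,\sigma(B'_1),\dots,\sigma(B'_n),t)\in\mathcal{P}(H_{\mathcal{B}'},R^{\mathcal{B}'}_0)$. For each $i$, let $B_i\in\mathcal{B}_i$ be the parent with $B'_i\subseteq B_i$. Then $\mathcal{S}_{B'_i}\subseteq\mathcal{S}_{B_i}$ gives $\tilde{c}_{\sigma(B_i)}\le\tilde{c}_{\sigma(B'_i)}$. Also, $B'_i\subseteq B_i$ for intervals gives $\mathbf{l}_{B_i}\preceq\mathbf{l}_{B'_i}$ componentwise. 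Set $\hat{\pi}=(s,\sigma(B_1),\dots,\sigma(B_n),t)$. By monotone aggregation (Assumption~\ref{assump:path_resources}(ii)), $F^r_i(\mathbf{l}^r_{B_1},\dots,\mathbf{l}^r_{B_i})\preceq F^r_i(\mathbf{l}^r_{B'_1},\dots,\mathbf{l}^r_{B'_i})$ for all $i$ and $r$. Downward closure (Assumption~\ref{assump:path_resources}(i)) then transfers feasibility, so $\hat{\pi}\in\mathcal{P}(H_{\mathcal{B}},R^{\mathcal{B}}_0)$. It follows that $\tilde{c}_{\text{opt}}(\mathcal{B})\le\tilde{c}_{\hat{\pi}}\le\tilde{c}_{\pi'}=\tilde{c}_{\text{opt}}(\mathcal{B}')$.

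One degenerate case needs a remark: an empty bucket, for which $\mathcal{S}_{B'}=\emptyset$, has no representative and is simply omitted from the graph. This does not affect either argument, since every bucket actually used in the paths above is nonempty.
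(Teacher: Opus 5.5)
Your proposal is correct and follows the paper's proof essentially step for step. For the pessimistic bound, you show that each parent representative $\sigma(B_i)$ lies in a unique child bucket and remains its representative; for the optimistic bound, you map each child bucket to its parent and combine $\mathbf{l}_{B_i}\preceq\mathbf{l}_{B'_i}$ with monotone aggregation and downward closure. Your explicit tie-breaking convention makes the argument more rigorous than the paper's: the paper simply asserts $\sigma(B'_i)=\sigma(B_i)$, which is only guaranteed when minimizers are chosen consistently, for example by inheritance or by a fixed total order on subpaths.
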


\proof{Proof.}
We start by proving the inequality on pessimistic pricing. Consider any path $\pi \in \mathcal{P}(H_{\mathcal{B}}, R_0)$, written as $(s, \sigma(B_1), \dots, \sigma(B_n), t)$. For each $i = 1, \dots, n$, the representative $\sigma(B_i)$ has a path resource consumption vector in $B_i$, and by conditions~(ii) and~(iii) of Definition~\ref{def:refinement} there is a unique bucket $B'_i \in \mathcal{B}'_i$ with $B'_i \subseteq B_i$ whose interval contains this vector. Since $\sigma(B_i)$ has minimum reduced cost over $\mathcal{S}_{B_i}$, it is optimal over the smaller set $\mathcal{S}_{B_{i}^{'}}$, i.e., $\sigma(B_{i}^{'}) = \sigma(B_i)$ and $\tilde{c}_{\sigma(B'_i)} = \tilde{c}_{\sigma(B_i)}$. The path $(\sigma(B'_1), \dots, \sigma(B'_n))$ therefore belongs to $\mathcal{P}(H_{\mathcal{B}'}, R_0)$ with reduced cost equal to $\tilde{c}_\pi$. This proves that $\tilde{c}_{\text{pes}}(\mathcal{B}') \leq \tilde{c}_{\text{pes}}(\mathcal{B})$.

Now consider the inequality on optimistic pricing. Consider any path $\pi' \in \mathcal{P}(H_{\mathcal{B}'}, R^{\mathcal{B}'}_0)$, written as $(s, \sigma(B'_1), \dots, \sigma(B'_n), t)$. By condition~(iii) of Definition~\ref{def:refinement}, each $B'_i$ is contained in some $B_i \in \mathcal{B}_i$, so $\mathbf{l}^{r}_{B_i} \preceq \mathbf{l}^{r}_{B'_i}$ for every path resource $r \in R_0$. Hence, the resource lower bound vector used in the optimistic relaxation of $\mathcal{B}$ is componentwise no larger than that of $\mathcal{B}'$. By monotone separable aggregation (Assumption~\ref{assump:path_resources}(ii)), $F^{r}_{i}(\mathbf{l}^{r}_{B_1}, \dots, \mathbf{l}^{r}_{B_i}) \preceq F^{r}_{i}(\mathbf{l}^{r}_{B'_1}, \dots, \mathbf{l}^{r}_{B'_i})$ componentwise for every block $i$ and every path resource $r$. Combined with downward closure (Assumption~\ref{assump:path_resources}(i)), feasibility of $\pi'$ under $R^{\mathcal{B}'}_0$ implies feasibility of the path $(s, \sigma(B_1), \dots, \sigma(B_n), t)$ under $R^{\mathcal{B}}_0$. Moreover, $\tilde{c}_{\sigma(B_i)} \leq \tilde{c}_{\sigma(B'_i)}$ by optimality of $\sigma(B_i)$ over the larger set $\mathcal{S}_{B_i} \supseteq \mathcal{S}_{B'_i}$, so the reduced cost of the constructed path does not exceed $\tilde{c}_{\pi'}$. Hence $\tilde{c}_{\text{opt}}(\mathcal{B}) \leq \tilde{c}_{\pi'}$, so that $\tilde{c}_{\text{opt}}(\mathcal{B}) \leq \tilde{c}_{\text{opt}}(\mathcal{B}')$.
\hfill\Halmos

\subsection{Convergence}
\label{subsec:bucket_convergence}

We now establish our main result that the adaptive partitioning algorithm solves the pricing problem to optimality in a finite number of refinement steps.

\begin{prop}[Convergence]
\label{prop:local_convergence}
Within a finite number of refinement steps, the adaptive partitioning algorithm either returns a negative reduced cost column or certifies that none exists.
\end{prop}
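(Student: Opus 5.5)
Within a fixed pricing call the duals $\mathbf{\lambda}$ do not change, so the representatives $\sigma(B)$ are fixed functions of the buckets. The plan is to show that the sequence of refinements cannot go on forever and, as it goes, forces the two bounds of Proposition~\ref{prop:rc_bounds} to agree. The argument rests on three facts.

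\textbf{Finiteness.} Each $\mathcal{S}_i$ is finite, so only finitely many consumption vectors $\mathbf{t}^{R_0}_\sigma$ occur in block $i$. Buckets that contain no subpath play no role in the bucket graph. Call two partitions equivalent when they induce the same partition of each $\mathcal{S}_i$ into the nonempty sets $\mathcal{S}_B$. There are only finitely many such induced partitions.

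\textbf{Progress.} A refinement is only triggered when $\tilde{c}_{\text{opt}}(\mathcal{B}) < 0 \le \tilde{c}_{\text{pes}}(\mathcal{B})$, and it splits buckets along the optimistic path. I will require, as the natural reading of the refinement step, that every refinement strictly refines the induced subpath partition of at least one block. Concretely, some bucket on the optimistic path must be split so that its allowed subpaths are separated. Definition~\ref{def:refinement}(iii) rules out merging, so induced partitions only get finer. Progress then forces termination after finitely many steps.

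\textbf{Termination state.} The refinement must be possible whenever the gap persists. Suppose every bucket on the optimistic path $(s,\sigma(B_1),\dots,\sigma(B_n),t)$ already contains only subpaths with a single consumption vector, so $\mathcal{S}_{B_i}$ cannot be separated further. Here I would shrink each bucket to the box $[\mathbf{t}^{R_0}_{\sigma(B_i)},\mathbf{t}^{R_0}_{\sigma(B_i)}]$; this is again a refinement, or a harmless relabelling, because it keeps the same nonempty $\mathcal{S}_B$ and shifts the empty remainder into separate empty buckets. After this step $\mathbf{l}_{B_i}=\mathbf{t}^{R_0}_{\sigma(B_i)}$, and the optimistic path is genuinely $R_0$-feasible. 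Its reduced cost is then at least $\tilde{c}_{\text{pes}}(\mathcal{B}) \ge 0$, contradicting $\tilde{c}_{\text{opt}}(\mathcal{B}) < 0$. So as long as the gap persists, some bucket on the optimistic path has either a splittable $\mathcal{S}_{B_i}$ or a lower bound $\mathbf{l}_{B_i}$ strictly below its representative's consumption.

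Putting these together: every refinement either strictly refines an induced subpath partition or tightens a lower bound of a nonempty bucket up to its actual minimum consumption. Both moves can happen only finitely often; the second is bounded by the finitely many consumption values. Proposition~\ref{prop:refinement} guarantees that nothing is lost along the way. Hence after finitely many steps the loop exits, either with $\tilde{c}_{\text{pes}}<0$, which returns a column, or with $\tilde{c}_{\text{opt}}\ge 0$, which certifies by Proposition~\ref{prop:rc_bounds} that no negative reduced cost column exists. In the limit partition into singletons with tight lower bounds, $H_{\mathcal{B}}$ coincides with the subpath graph $H$ and the two bounds are equal.

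\textbf{Main obstacle.} Definition~\ref{def:refinement} only requires $|\mathcal{B}'_i|$ to grow, and a refinement could split empty regions of $\mathbb{Z}^{R_0}$ indefinitely, since the buckets partition an infinite set. I would close this gap by making explicit the assumption that refinement splits a bucket on the optimistic path so as to separate its subpaths, or to raise its lower bound to the representative's consumption. I would then argue, as in the termination step above, that such a split is always available while $\tilde{c}_{\text{opt}}<0\le\tilde{c}_{\text{pes}}$.
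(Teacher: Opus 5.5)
Your proof is correct under the refinement rule you make explicit, but it takes a different and more careful route than the paper's.

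The paper argues by counting. It claims that, by Definition~\ref{def:refinement}, only buckets with $\mathcal{S}_B \neq \emptyset$ are split, and bounds the number of non-empty buckets by $|\mathcal{S}|$. From this it infers that after at most $|\mathcal{S}|$ steps every bucket holds subpaths sharing one consumption vector, and asserts that the two bounds then coincide.

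Your ``main obstacle'' is exactly where this breaks. Definition~\ref{def:refinement} says nothing about non-emptiness, and even splitting a non-empty bucket need not increase the number of non-empty buckets. Your extra assumption is genuinely needed: with unbounded buckets one can split empty regions forever. Moreover, a bucket whose subpaths all share a vector $\mathbf{t}$ can still have $\mathbf{l}_B \prec \mathbf{t}$, so the optimistic bound need not yet equal the pessimistic one.

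You replace the count by a well-founded potential: the induced partition of each $\mathcal{S}_i$, and then the lower bounds of non-empty buckets, which only rise by condition~(iii). You also supply the lemma the paper's proof lacks. While $\tilde{c}_{\text{opt}}(\mathcal{B}) < 0 \le \tilde{c}_{\text{pes}}(\mathcal{B})$, some bucket on the optimistic path must have $\mathbf{l}_{B_i} \neq \mathbf{t}^{R_0}_{\sigma(B_i)}$. Otherwise that path lies in $\mathcal{P}(H_{\mathcal{B}}, R_0)$ and would have reduced cost at least $\tilde{c}_{\text{pes}}(\mathcal{B}) \ge 0$.

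That is the cleanest form of your ``splittable or non-tight'' dichotomy. Splitting such a bucket at $\mathbf{t}^{R_0}_{\sigma(B_i)}$, which is the paper's representative strategy, either separates its subpaths or raises its lower bound strictly, up to the componentwise minimum consumption. So one concrete rule realizes both of your progress moves.

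The paper's version is shorter and gives an explicit $|\mathcal{S}|$ bound; yours actually closes the two steps the count leaves open.

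Two small caveats:
\begin{itemize}
\item Your progress condition can fail for the midpoint strategy. If all subpaths land in the sub-box containing $\mathbf{l}_B$, neither the induced partition nor the lower bound changes, so finiteness there must come instead from halving bounded integer boxes.
\item Your closing remark about a limit partition into singletons with $H_{\mathcal{B}} = H$ is not reachable in general. Subpaths with identical consumption vectors can never be separated by interval buckets. This is harmless, since your termination argument never uses it.
\end{itemize}
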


\proof{Proof.}
If $\tilde{c}_{\text{pes}}(\mathcal{B}) < 0$, the algorithm immediately returns a negative reduced cost column. If $\tilde{c}_{\text{opt}}(\mathcal{B}) \geq 0$, it follows from Proposition~\ref{prop:rc_bounds} that no negative reduced cost column exists, and the algorithm terminates. Otherwise, the algorithm refines $\mathcal{B}$. We argue that this case can occur only finitely many times.

Each refinement step splits at least one bucket, strictly increasing $|\mathcal{B}|$. By Definition~\ref{def:refinement}, a bucket $B$ can only be split when it contains at least one feasible subpath, i.e., $\mathcal{S}_B \neq \emptyset$. Since the set $\mathcal{S}$ of feasible subpaths is finite by assumption, the total number of non-empty buckets is bounded by $|\mathcal{S}|$. After at most $|\mathcal{S}|$ refinement steps, every bucket contains subpaths sharing a single path-resource consumption vector, so that each bucket representative is the unique minimum reduced cost subpath in its bucket. At this point, the bucket graph $H_{\mathcal{B}}$ contains all non-dominated subpaths, the pessimistic and optimistic bounds coincide, and $\tilde{c}_{\text{pes}}(\mathcal{B}) = \min_{\pi \in \mathcal{P}(G, R)} \tilde{c}_\pi$. The algorithm then either returns a negative reduced cost column or correctly certifies optimality.
\hfill\Halmos

Proposition~\ref{prop:local_convergence} does not exclude the possibility that the algorithm reverts to a full enumeration of non-dominated subpaths in the worst case. In our computational experiments, however, the adaptive partitioning algorithm typically outperforms the enumerative benchmark.

Finally, finite convergence of the pricing algorithm translates to finite convergence of the overall column generation algorithm.

\begin{prop}[Column generation exactness]
\label{prop:global_convergence}
Column generation with the adaptive partitioning algorithm converges to the optimal solution of the LP relaxation of model~\eqref{eq:master_ip} in a finite number of column generation iterations.
\end{prop}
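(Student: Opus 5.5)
We reduce Proposition~\ref{prop:global_convergence} to the standard finite convergence of column generation with an exact pricing oracle. Proposition~\ref{prop:local_convergence} says that each call to the adaptive partitioning algorithm terminates after finitely many refinement steps. It then either returns a path $\pi \in \mathcal{P}(G,R)$ with $\tilde{c}_\pi < 0$ (from pessimistic pricing) or certifies that $\min_{\pi \in \mathcal{P}(G,R)} \tilde{c}_\pi \geq 0$ (from optimistic pricing, via Proposition~\ref{prop:rc_bounds}). So every column generation iteration does finite work and ends in exactly one of these two outcomes. Finiteness of each individual RMP solve is standard for a finite LP.

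\textbf{Step 1: the terminal case is LP-optimal.} Suppose pricing certifies that no negative reduced cost path exists. Then the RMP dual solution $\mathbf{\lambda} \geq 0$ satisfies $c_\pi - \sum_{k\in\pi}\lambda_k \geq 0$ for all $\pi\in\mathcal{P}$, so it is dual feasible for the full LP relaxation of model~\eqref{eq:master_ip}. The RMP primal solution, padded with zeros, is primal feasible for that LP. The two objectives coincide by RMP strong duality, so weak duality shows both are optimal. Side constraints and partitioning variants only add dual terms to \eqref{eq:reduced_cost_path}; as long as reduced costs remain arc-decomposable, the pricing oracle is unchanged.

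\textbf{Step 2: the non-terminal case happens only finitely often.} Whenever pricing returns $\pi$ with $\tilde{c}_\pi<0$, this $\pi$ is not already in $\mathcal{P}'$. Every column in the RMP has nonnegative reduced cost with respect to the optimal RMP duals, by LP optimality of the RMP. Hence each non-terminal iteration strictly enlarges $\mathcal{P}'$. Since $\mathcal{P}(G,R)$ is finite, the number of non-terminal iterations is at most $|\mathcal{P}|$.

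Finiteness of $\mathcal{P}(G,R)$ is where some care is needed. The paper assumes each $\mathcal{S}_i$ is finite, and every feasible path decomposes uniquely as $(s,\sigma_1,\dots,\sigma_n,t)$ with $\sigma_i\in\mathcal{S}_i$. Therefore $|\mathcal{P}|\leq\prod_i|\mathcal{S}_i|<\infty$.

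Combining Steps 1 and 2 with Proposition~\ref{prop:local_convergence} gives finite termination at an LP-optimal RMP solution. The bound on the number of iterations is crude, but the statement only asks for finiteness. I expect no real obstacle beyond the finiteness of $\mathcal{P}$, which follows directly from the nested decomposition.
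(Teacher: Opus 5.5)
Your proof is correct and follows the same route as the paper: you invoke Proposition~\ref{prop:local_convergence} for finite termination of each pricing call, use the nonnegative optimistic bound as a certificate of LP optimality, and bound the number of column additions by the finiteness of $\mathcal{P}$. You also make explicit two points the paper leaves implicit (each returned column is new because every RMP column has nonnegative reduced cost at the optimal RMP duals, and $|\mathcal{P}| \leq \prod_i |\mathcal{S}_i|$ follows from the block decomposition), which tightens the same argument rather than changing it.
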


\proof{Proof.} 
By Proposition~\ref{prop:local_convergence}, each call to the pricing algorithm either adds a negative reduced cost column to the RMP or correctly concludes that no such column exists, in which case the current RMP solution is optimal for the LP relaxation of model~\eqref{eq:master_ip}. In the former case, a new column is added to the RMP. Since the set $\mathcal{P}$ of feasible paths is finite, the number of columns that can be added is finite, and the algorithm finds the optimal solution in a finite number of iterations.
\hfill\Halmos

\subsection{Design Choices and Acceleration Techniques}
\label{subsec:bucket_design}

The adaptive partitioning algorithm involves several design choices. We discuss the most important ones, as well as some acceleration techniques, and evaluate them in Sections~\ref{sec:cvrp}-\ref{sec:crew}. 

\paragraph{Initial bucket granularity.}

A coarser initialization reduces the upfront cost of computing bucket representatives, but may require more refinement iterations before convergence to a path of minimum reduced cost. A finer initialization yields a more accurate approximation of the subpath graph and will lead to more (diverse) columns per iteration. Of course, this comes at a computational cost when applying the labeling algorithms over the finer bucket graph.

\paragraph{Refinement strategy.}

We consider two strategies for splitting buckets during the refinement step, illustrated in Figure~\ref{fig:refinement_strategies} with two-dimensional resources. The \emph{midpoint strategy} splits a bucket exactly in half along every resource axis, regardless of the resource consumption of its representative. The \emph{representative strategy} instead splits a bucket at the resource consumption vector of its current representative, so that the resource consumption of the representative becomes the lower bound of one of the resulting sub-buckets. The representative strategy has the advantage of immediately separating the incumbent representative from the lower sub-bucket, ensuring that the optimistic path that triggered refinement is directly eliminated. The midpoint strategy, by contrast, may require multiple refinement steps to eliminate the same path, but produces more balanced buckets.

%
%

\begin{figure}[htbp]
\centering

\begin{subfigure}[c]{0.33\textwidth}
\centering
\begin{tikzpicture}

  \draw[->, >=stealth, black, line width=0.6pt]
    (0, 0) -- (3.20, 0)
    node[right, font=\scriptsize\sffamily] {$r_1$};
  \draw[->, >=stealth, black, line width=0.6pt]
    (0, 0) -- (0, 3.20)
    node[above, font=\scriptsize\sffamily] {$r_2$};

  \node[bkt, minimum width=2.60cm, minimum height=2.60cm]
    (bbox) at (1.50, 1.50) {};
  \node[font=\scriptsize\sffamily, medBlue, anchor=south east, inner sep=3pt]
    at (bbox.south east) {$B$};

  \draw[darkBlue, dotted, line width=0.5pt] (0.22, 0) -- (0.22, -0.05);
  \node[font=\scriptsize\sffamily, darkBlue, below=2pt] at (0.22, 0) {$\mb{l}^{r_1}_{B}$};
  \draw[darkBlue, dotted, line width=0.5pt] (2.78, 0) -- (2.78, -0.05);
  \node[font=\scriptsize\sffamily, darkBlue, below=2pt] at (2.78, 0) {$\mb{u}^{r_1}_{B}$};
  \draw[darkBlue, dotted, line width=0.5pt] (0, 0.22) -- (-0.05, 0.22);
  \node[font=\scriptsize\sffamily, darkBlue, left=2pt] at (0, 0.22) {$\mb{l}^{r_2}_{B}$};
  \draw[darkBlue, dotted, line width=0.5pt] (0, 2.78) -- (-0.05, 2.78);
  \node[font=\scriptsize\sffamily, darkBlue, left=2pt] at (0, 2.78) {$\mb{u}^{r_2}_{B}$};

  \Subpath{pa}{(0.60, 2.40)}
  \Subpath{pb}{(2.30, 2.10)}
  \Subpath{pc}{(2.50, 0.80)}
  \Subpath{pd}{(0.50, 0.70)}
  \Subpath{pe}{(1.80, 0.50)}

  \Rep{rep}{(2.30, 2.10)}{}
  \node[font=\scriptsize\sffamily, darkBlue, above=2pt] at (rep) {$\sigma(B)$};

  \draw[darkBlue, dotted, line width=0.5pt, shorten >=3pt]
    (2.30, 0) -- (rep);
  \draw[darkBlue, dotted, line width=0.5pt, shorten >=3pt]
    (0, 2.10) -- (rep);
  \node[font=\scriptsize\sffamily, darkBlue, below=2pt] at (2.30, 0)
    {$\mb{t}^{r_1}_{\sigma(B)}$};
  \node[font=\scriptsize\sffamily, darkBlue, left=2pt] at (0, 2.10)
    {$\mb{t}^{r_2}_{\sigma(B)}$};

\end{tikzpicture}
\caption{Bucket $B$}
\end{subfigure}%
\hfill
\begin{subfigure}[c]{0.33\textwidth}
\centering
\begin{tikzpicture}

  \draw[->, >=stealth, black, line width=0.6pt]
    (0, 0) -- (3.20, 0)
    node[right, font=\scriptsize\sffamily] {$r_1$};
  \draw[->, >=stealth, black, line width=0.6pt]
    (0, 0) -- (0, 3.20)
    node[above, font=\scriptsize\sffamily] {$r_2$};

  \node[bkt, minimum width=1.25cm, minimum height=1.25cm] (b11) at (0.845, 0.845) {};
  \node[bkt, minimum width=1.25cm, minimum height=1.25cm] (b12) at (2.155, 0.845) {};
  \node[bkt, minimum width=1.25cm, minimum height=1.25cm] (b21) at (0.845, 2.155) {};
  \node[bkt, minimum width=1.25cm, minimum height=1.25cm] (b22) at (2.155, 2.155) {};

  \draw[darkBlue, dotted, line width=0.5pt] (1.50, 0) -- (1.50, -0.05);
  \node[font=\scriptsize\sffamily, darkBlue, below=2pt] at (1.50, 0) {$\mb{m}^{r_1}_{B}$};
  \draw[darkBlue, dotted, line width=0.5pt] (0, 1.50) -- (-0.05, 1.50);
  \node[font=\scriptsize\sffamily, darkBlue, left=2pt] at (0, 1.50) {$\mb{m}^{r_2}_{B}$};

  \draw[darkBlue, dotted, line width=0.5pt] (0.22, 0) -- (0.22, -0.05);
  \node[font=\scriptsize\sffamily, darkBlue, below=2pt] at (0.22, 0) {$\mb{l}^{r_1}_{B}$};
  \draw[darkBlue, dotted, line width=0.5pt] (2.78, 0) -- (2.78, -0.05);
  \node[font=\scriptsize\sffamily, darkBlue, below=2pt] at (2.78, 0) {$\mb{u}^{r_1}_{B}$};
  \draw[darkBlue, dotted, line width=0.5pt] (0, 0.22) -- (-0.05, 0.22);
  \node[font=\scriptsize\sffamily, darkBlue, left=2pt] at (0, 0.22) {$\mb{l}^{r_2}_{B}$};
  \draw[darkBlue, dotted, line width=0.5pt] (0, 2.78) -- (-0.05, 2.78);
  \node[font=\scriptsize\sffamily, darkBlue, left=2pt] at (0, 2.78) {$\mb{u}^{r_2}_{B}$};

  \Subpath{pc}{(2.50, 0.80)}

  \Rep{repBL}{(0.50, 0.70)}{}
  \Rep{repBR}{(1.80, 0.50)}{}
  \Rep{repTL}{(0.60, 2.40)}{}
  \Rep{repTR}{(2.30, 2.10)}{}

\end{tikzpicture}
\caption{Midpoint strategy}
\end{subfigure}%
\hfill
\begin{subfigure}[c]{0.33\textwidth}
\centering
\begin{tikzpicture}

  \draw[->, >=stealth, black, line width=0.6pt]
    (0, 0) -- (3.20, 0)
    node[right, font=\scriptsize\sffamily] {$r_1$};
  \draw[->, >=stealth, black, line width=0.6pt]
    (0, 0) -- (0, 3.20)
    node[above, font=\scriptsize\sffamily] {$r_2$};

  \node[bkt, minimum width=1.95cm, minimum height=1.75cm] (bl) at (1.195, 1.095) {};
  \node[bkt, minimum width=0.55cm, minimum height=1.75cm] (br) at (2.505, 1.095) {};
  \node[bkt, minimum width=1.95cm, minimum height=0.75cm] (tl) at (1.195, 2.405) {};
  \node[bkt, minimum width=0.55cm, minimum height=0.75cm] (tr) at (2.505, 2.405) {};

  \draw[darkBlue, dotted, line width=0.5pt] (2.30, 0) -- (2.30, -0.05);
  \node[font=\scriptsize\sffamily, darkBlue, below=2pt] at (2.30, 0)
    {$\mb{t}^{r_1}_{\sigma(B)}$};
  \draw[darkBlue, dotted, line width=0.5pt] (0, 2.10) -- (-0.05, 2.10);
  \node[font=\scriptsize\sffamily, darkBlue, left=2pt] at (0, 2.10)
    {$\mb{t}^{r_2}_{\sigma(B)}$};

  \draw[darkBlue, dotted, line width=0.5pt] (0.22, 0) -- (0.22, -0.05);
  \node[font=\scriptsize\sffamily, darkBlue, below=2pt] at (0.22, 0) {$\mb{l}^{r_1}_{B}$};
  \draw[darkBlue, dotted, line width=0.5pt] (2.78, 0) -- (2.78, -0.05);
  \node[font=\scriptsize\sffamily, darkBlue, below=2pt] at (2.78, 0) {$\mb{u}^{r_1}_{B}$};
  \draw[darkBlue, dotted, line width=0.5pt] (0, 0.22) -- (-0.05, 0.22);
  \node[font=\scriptsize\sffamily, darkBlue, left=2pt] at (0, 0.22) {$\mb{l}^{r_2}_{B}$};
  \draw[darkBlue, dotted, line width=0.5pt] (0, 2.78) -- (-0.05, 2.78);
  \node[font=\scriptsize\sffamily, darkBlue, left=2pt] at (0, 2.78) {$\mb{u}^{r_2}_{B}$};

  \Subpath{pd}{(0.50, 0.70)}

  \Rep{repBL}{(1.80, 0.50)}{}
  \Rep{repBR}{(2.50, 0.80)}{}
  \Rep{repTL}{(0.60, 2.40)}{}
  \Rep{repTR}{(2.30, 2.10)}{}

\end{tikzpicture}
\caption{Representative strategy}
\end{subfigure}

\caption{Refinement strategies applied to bucket $B$ in a two-dimensional
path-resource space, with simultaneous splits along all resources (leading to four sub-buckets per refined bucket). The midpoint strategy splits $B$ into four equal sub-buckets, while the representative strategy instead splits at the resource consumption of the current representative $\sigma(B)$.}
\label{fig:refinement_strategies}
\end{figure}

Of course, one can devise more sophisticated refinement strategies than those. Notably, our strategies involve splitting along all resource axes simultaneously, yielding $2^{\Pi_{r \in R_0} d_r}$ sub-buckets per refined bucket. Yet, any strategy yielding at least two sub-buckets is valid, for instance by splitting along a single resource axis or a subset of resource axes. Moreover, one can exploit information about the optimistic path and its resource violations to refine only a single critical bucket along the path or to apply different splitting rules for different resource dimensions.

\paragraph{Reusing representatives across iterations.}

Computing bucket representatives with respect to updated dual variables can be expensive. Since dual variables typically change only moderately between consecutive column generation iterations, one may first perform pessimistic pricing using the representatives from the previous iteration, and only recompute representatives when no negative reduced cost path is found.

\paragraph{Bucket merging.}

While refinement is necessary to guarantee convergence, it also grows the bucket graph and thereby increases the cost of subsequent pricing steps. We counteract this with a merging operation that coarsens $H_{\mathcal{B}}$ by combining adjacent buckets whenever doing so does not exclude any negative reduced cost path. The intuition is that two adjacent buckets contribute little by remaining separate if the optimistic path through their union has nonnegative reduced cost: in that case, the merged bucket would not trigger refinement in the next iteration, and keeping it split only inflates the bucket graph.

Formally, we say that two buckets $B, B' \in \mathcal{B}_i$ are eligible for merging when (i) they are adjacent, i.e., $\mathbf{l}_{B'} = \mathbf{u}_{B} + \mathbf{e_i}$ for some unit vector $\mathbf{e}_i$, and (ii) the merged bucket $B \cup B'$ would not need to be immediately refined in the next iteration. To make condition~(ii) precise, define
\begin{align}
    \tilde{c}^{*}_{\text{opt}}(\mathcal{B}, B) = \min_{\pi \in \mathcal{P}(H_{\mathcal{B}}, R^{\mathcal{B}}_0):\ \sigma(B) \in \pi} \tilde{c}_\pi
\end{align}
as the minimum reduced cost over all optimistic paths passing through bucket $B$. Condition~(ii) is then equivalent to
\begin{align}
    \tilde{c}^{*}_{\text{opt}}(\mathcal{B}, B) + \min\{0, \tilde{c}_{\sigma(B')} - \tilde{c}_{\sigma(B)}\} \geq 0,
\end{align}
where the left-hand side equals the reduced cost of the optimistic path through the bucket obtained by merging $B$ and $B'$.

A slight modification of the bidirectional labeling algorithm yields $\tilde{c}^{*}_{\text{opt}}(\mathcal{B}, B)$ for all buckets $B \in \mathcal{B}$. This is exact but may be computationally expensive. As an alternative, we drop the path resource constraints and verify condition~(ii) using a shortest path criterion on $H_{\mathcal{B}}$. The shortest path criterion may identify fewer mergeable pairs but is more efficient computationally. In our implementation, we adopt the latter approach whenever the number of buckets exceeds a threshold.

\subsection{Extensions}
\label{subsec:bucket_extensions}

As column generation with adaptive partitioning can solve the LP relaxation of~\eqref{eq:master_ip} to optimality, it can be embedded into branch-price-and-cut algorithms for solving~\eqref{eq:master_ip}. To this end, the adaptive partitioning algorithm needs to be compatible with the core elements of branch-price-and-cut algorithms and standard acceleration techniques.

\emph{Heuristic pricing.} One may avoid expensive representative computations by using heuristic algorithms to find representatives, similarly to the reuse of representatives across iterations discussed above. This can significantly reduce the number of exact representative computations required, particularly in the early iterations of column generation.

\emph{Multiple pricing problems.} When the pricing problem decomposes across multiple independent subproblems, for instance due to heterogeneous vehicles or multiple crew depots, one can maintain a dedicated bucket graph per subproblem and solve them in parallel.

\emph{Cutting planes.} The algorithm is compatible with robust cutting planes, i.e., cuts that can be added to the master problem without modifying the pricing problem. 

\emph{Branching.} The algorithm is compatible with branching decisions that can be enforced either at the subpath level, i.e., when generating bucket representatives, or at the path level, i.e., when pricing in the bucket graph. After branching, representatives that were computed under a parent node may become infeasible or suboptimal with respect to the new branching constraints. When reusing representatives from earlier iterations, explicit feasibility checks must therefore be performed to ensure compatibility with the current branching decisions.

\emph{Reduced cost fixing.} Standard reduced cost fixing can be applied both in the element graph, to prune arcs that cannot lead to a negative reduced cost path, and in the bucket graph, to eliminate buckets whose optimistic reduced cost exceeds the current bound \citep{baldacci2008exact}. 

\section{Computational Results on the B-MPCVRP}
\label{sec:cvrp}

We demonstrate the effectiveness of the adaptive partitioning algorithm on the B-MPCVRP (Example~\ref{ex:cvrp}). We solve this problem using a path-based formulation containing a variable for every feasible route schedule. This formulation is similar to model~\eqref{eq:master_ip}, with set partitioning constraints instead of set covering constraints and an additional constraint enforcing that exactly $K$ schedules are selected. We solve the LP relaxation via column generation with the adaptive partitioning algorithm, using a partition of the set of daily routes into buckets specifying distance intervals.

We describe the set-up of our experiments in Section~\ref{subsec:cvrp_setup}. We analyze the effect of several design choices on the performance of the adaptive partitioning algorithm for solving the root node in Section~\ref{subsec:cvrp_strategies}, and compare the best-performing configuration against the enumerative benchmark in Section~\ref{subsec:cvrp_benchmark}. Finally, we compare branch-price-and-cut for the path-based formulation against its counterpart for a subpath-based formulation in Section~\ref{subsec:cvrp_branchPrice}.

\subsection{Set-Up}
\label{subsec:cvrp_setup}

We generate instances based on the 640-customer CVRP benchmark instance \texttt{X-n641-k35} from \citet{uchoa2017new}. For each number of daily customers $n \in \{15, 20, 25\}$ and number of days $T \in \{2, 3, 4\}$, we construct five instances by selecting disjoint sets of size $(n + 1) \cdot T$ from the benchmark instance. We set the number of vehicles to $K = 5$.

We vary the tightness of the maximum total distance constraint $D$. We compute a lower bound $D_{\min}$ by solving a regular CVRP for each day independently and dividing the total cost by $K$. For the upper bound $D_{\max}$, we follow the sequential heuristic of \citet{nekooghadirli2026workload}, which solves the same daily CVRPs but assigns the resulting routes to vehicles to minimize the maximum workload rather than total cost. We set $D = D_{\min} + \delta \cdot (D_{\max} - D_{\min})$ for $\delta \in \{0.1, 0.3, 0.5, 0.7, 0.9\}$, yielding $225$ instances in total, with smaller $\delta$ corresponding to tighter workload constraints.

The column generation procedure is configured as follows. We initialize the RMP with a set of artificial columns. In each pricing iteration, we generate at most 500 columns and add all of them to the RMP. We apply Wentges dual smoothing with parameter $\alpha = 0.5$ \citep{pessoa2018automation}. Every five iterations, we remove all columns that have not been in the basis during the last ten iterations, while maintaining at least 1,000 non-basic columns. We use custom implementations of bidirectional labeling to determine bucket representatives and to perform pessimistic and optimistic pricing in the bucket graph. We find representatives for different buckets in parallel on up to eight threads.

We implement all algorithms in \texttt{Java}, and solve all LPs using \texttt{CPLEX 22.1.0} on six threads. All experiments are conducted on cluster nodes with 16 GB RAM and an AMD Rome 7H12 processor.

\subsection{Configuration of Adaptive Partitioning}
\label{subsec:cvrp_strategies}

We evaluate the adaptive partitioning algorithm across all combinations of three design choices: whether to reuse representatives from the previous column generation iteration (\emph{Reuse}), whether to apply the midpoint or representative refinement strategy (\emph{Midway}), and whether to apply bucket merging (\emph{Merge}). For all eight combinations, we vary the initial bucket width across $\{100, 250, 500\}$ distance units to assess the sensitivity of the algorithm to the initial granularity of the bucket graph. We assess these 24 configurations by solving the root node LP relaxation on the largest instances with $n = 25$ customers per day, $T = 4$ periods, and $\delta = 10\%$.

\begin{figure}[h!]
  \centering
  \begin{subfigure}[b]{4.60cm}
    \centering
    \begin{tikzpicture}[x=1cm, y=1cm]

      \node[font=\tiny, black, left] at (-0.1,0) {0};
      \node[font=\tiny, black, left] at (-0.1,1.25) {75};
      \node[font=\tiny, black, left] at (-0.1,2.50) {150};
      \node[font=\tiny, black, left] at (-0.1,3.75) {225};
      \node[font=\tiny, black, left] at (-0.1,5.00) {300};

      \draw[->, >=stealth, black, line width=0.6pt] (0,-0.1) -- (0,5.40)
        node[above, font=\tiny, black] {Time (s)};

      \draw[fill=medBlue!20, draw=darkBlue!75, line width=0.5pt] (0.00, 0) rectangle (0.32, 2.11);
      \draw[fill=darkBlue!75, draw=darkBlue!75, line width=0.5pt] (0.42, 0) rectangle (0.74, 2.90);
      \node[font=\tiny, black, below=1pt] at (0.37, 0) {100};

      \draw[fill=medBlue!20, draw=darkBlue!75, line width=0.5pt] (1.09, 0) rectangle (1.41, 2.06);
      \draw[fill=darkBlue!75, draw=darkBlue!75, line width=0.5pt] (1.51, 0) rectangle (1.83, 2.53);
      \node[font=\tiny, black, below=1pt] at (1.46, 0) {250};

      \draw[fill=medBlue!20, draw=darkBlue!75, line width=0.5pt] (2.18, 0) rectangle (2.50, 2.82);
      \draw[fill=darkBlue!75, draw=darkBlue!75, line width=0.5pt] (2.60, 0) rectangle (2.92, 2.92);
      \node[font=\tiny, black, below=1pt] at (2.55, 0) {500};

      \draw[->, >=stealth, black, line width=0.6pt] (-0.1,0) -- (3.12,0);
    \end{tikzpicture}
    \caption{No reuse, midway}
  \end{subfigure}
  \hspace{0.5cm}
  \begin{subfigure}[b]{4.60cm}
    \centering
    \begin{tikzpicture}[x=1cm, y=1cm]

      \node[font=\tiny, black, left] at (-0.1,0) {0};
      \node[font=\tiny, black, left] at (-0.1,1.25) {75};
      \node[font=\tiny, black, left] at (-0.1,2.50) {150};
      \node[font=\tiny, black, left] at (-0.1,3.75) {225};
      \node[font=\tiny, black, left] at (-0.1,5.00) {300};

      \draw[->, >=stealth, black, line width=0.6pt] (0,-0.1) -- (0,5.40)
        node[above, font=\tiny, black] {Time (s)};

      \draw[fill=medBlue!20, draw=darkBlue!75, line width=0.5pt] (0.00, 0) rectangle (0.32, 1.88);
      \draw[fill=darkBlue!75, draw=darkBlue!75, line width=0.5pt] (0.42, 0) rectangle (0.74, 2.65);
      \node[font=\tiny, black, below=1pt] at (0.37, 0) {100};

      \draw[fill=medBlue!20, draw=darkBlue!75, line width=0.5pt] (1.09, 0) rectangle (1.41, 2.04);
      \draw[fill=darkBlue!75, draw=darkBlue!75, line width=0.5pt] (1.51, 0) rectangle (1.83, 2.19);
      \node[font=\tiny, black, below=1pt] at (1.46, 0) {250};

      \draw[fill=medBlue!20, draw=darkBlue!75, line width=0.5pt] (2.18, 0) rectangle (2.50, 4.04);
      \draw[fill=darkBlue!75, draw=darkBlue!75, line width=0.5pt] (2.60, 0) rectangle (2.92, 4.51);
      \node[font=\tiny, black, below=1pt] at (2.55, 0) {500};

      \draw[->, >=stealth, black, line width=0.6pt] (-0.1,0) -- (3.12,0);
    \end{tikzpicture}
    \caption{No reuse, representative}
  \end{subfigure}
  \par\medskip
  \begin{subfigure}[b]{4.60cm}
    \centering
    \begin{tikzpicture}[x=1cm, y=1cm]

      \node[font=\tiny, black, left] at (-0.1,0) {0};
      \node[font=\tiny, black, left] at (-0.1,1.25) {75};
      \node[font=\tiny, black, left] at (-0.1,2.50) {150};
      \node[font=\tiny, black, left] at (-0.1,3.75) {225};
      \node[font=\tiny, black, left] at (-0.1,5.00) {300};

      \draw[->, >=stealth, black, line width=0.6pt] (0,-0.1) -- (0,5.40)
        node[above, font=\tiny, black] {Time (s)};

      \draw[fill=medBlue!20, draw=darkBlue!75, line width=0.5pt] (0.00, 0) rectangle (0.32, 1.81);
      \draw[fill=darkBlue!75, draw=darkBlue!75, line width=0.5pt] (0.42, 0) rectangle (0.74, 2.26);
      \node[font=\tiny, black, below=1pt] at (0.37, 0) {100};

      \draw[fill=medBlue!20, draw=darkBlue!75, line width=0.5pt] (1.09, 0) rectangle (1.41, 1.76);
      \draw[fill=darkBlue!75, draw=darkBlue!75, line width=0.5pt] (1.51, 0) rectangle (1.83, 1.98);
      \node[font=\tiny, black, below=1pt] at (1.46, 0) {250};

      \draw[fill=medBlue!20, draw=darkBlue!75, line width=0.5pt] (2.18, 0) rectangle (2.50, 2.38);
      \draw[fill=darkBlue!75, draw=darkBlue!75, line width=0.5pt] (2.60, 0) rectangle (2.92, 2.60);
      \node[font=\tiny, black, below=1pt] at (2.55, 0) {500};

      \draw[->, >=stealth, black, line width=0.6pt] (-0.1,0) -- (3.12,0);
    \end{tikzpicture}
    \caption{Reuse, midway}
  \end{subfigure}
  \hspace{0.5cm}
  \begin{subfigure}[b]{4.60cm}
    \centering
    \begin{tikzpicture}[x=1cm, y=1cm]

      \node[font=\tiny, black, left] at (-0.1,0) {0};
      \node[font=\tiny, black, left] at (-0.1,1.25) {75};
      \node[font=\tiny, black, left] at (-0.1,2.50) {150};
      \node[font=\tiny, black, left] at (-0.1,3.75) {225};
      \node[font=\tiny, black, left] at (-0.1,5.00) {300};

      \draw[->, >=stealth, black, line width=0.6pt] (0,-0.1) -- (0,5.40)
        node[above, font=\tiny, black] {Time (s)};

      \draw[fill=medBlue!20, draw=darkBlue!75, line width=0.5pt] (0.00, 0) rectangle (0.32, 1.70);
      \draw[fill=darkBlue!75, draw=darkBlue!75, line width=0.5pt] (0.42, 0) rectangle (0.74, 2.34);
      \node[font=\tiny, black, below=1pt] at (0.37, 0) {100};

      \draw[fill=medBlue!20, draw=darkBlue!75, line width=0.5pt] (1.09, 0) rectangle (1.41, 1.74);
      \draw[fill=darkBlue!75, draw=darkBlue!75, line width=0.5pt] (1.51, 0) rectangle (1.83, 2.07);
      \node[font=\tiny, black, below=1pt] at (1.46, 0) {250};

      \draw[fill=medBlue!20, draw=darkBlue!75, line width=0.5pt] (2.18, 0) rectangle (2.50, 3.28);
      \draw[fill=darkBlue!75, draw=darkBlue!75, line width=0.5pt] (2.60, 0) rectangle (2.92, 3.59);
      \node[font=\tiny, black, below=1pt] at (2.55, 0) {500};

      \draw[->, >=stealth, black, line width=0.6pt] (-0.1,0) -- (3.12,0);
    \end{tikzpicture}
    \caption{Reuse, representative}
  \end{subfigure}
  \par\medskip
  \begin{tikzpicture}[x=1cm, y=1cm]
    \draw[fill=medBlue!20, draw=darkBlue!75, line width=0.5pt] (0,0) rectangle (0.32,0.20);
    \node[font=\tiny, black, right] at (0.32,0.10) {Merge};
    \draw[fill=darkBlue!75, draw=darkBlue!75, line width=0.5pt] (2.20,0) rectangle (2.52,0.20);
    \node[font=\tiny, black, right] at (2.52,0.10) {No merge};
  \end{tikzpicture}
  \caption{Root node computation time of adaptive partitioning per configuration for the B-MPCVRP. Results are averaged over all instances with $n=25$, $T=4$, and $\delta=10\%$.}
  \label{fig:cvrp_root_strategies}
\end{figure}

Figure~\ref{fig:cvrp_root_strategies} plots the average computation times for all 24 configurations. Detailed results are reported in Table~\ref{tab:root_strategies_cvrp} in Appendix~\ref{app:results}. First, merging has a consistent positive effect across all configurations, by keeping the bucket graph compact to compute representatives more efficiently. Reusing representatives from the previous column generation iteration is similarly beneficial, again by avoiding representative computations when dual variables have changed only moderately. Moreover, an intermediate initialization performs best overall: an initial bucket width of 100 requires finding representatives for many small buckets, while a width of 500 leads to slower convergence as more refinement iterations are needed to align the pessimistic and optimistic bounds. In fact, Table~\ref{tab:root_strategies_cvrp} shows that finding representatives consumes over 99\% of computation time among all configurations. Finally, the midpoint refinement strategy outperforms the representative strategy, especially at larger initial bucket widths. Altogether, the best-performing configuration combines an initial bucket width of 250 with merging, midpoint refinement, and reuse of representatives. We adopt this configuration for the remainder of our experiments in this section.

\subsection{Adaptive Partitioning versus Enumerative Benchmark}
\label{subsec:cvrp_benchmark}

Figure~\ref{fig:cvrp_root_comparison} shows the average root node computation times for the adaptive partitioning algorithm and the enumerative benchmark across all instance configurations. Detailed iteration counts are provided in Table~\ref{tab:cvrp_root_comparison} in Appendix~\ref{app:results}. The main observation is that the adaptive partitioning algorithm outperforms the benchmark on all but the smallest instances ($n = 15$), and its advantage grows sharply with the number of customers. In the largest instances with $n = 25$ and $T = 4$, it achieves a speed-up of a factor of over 13 (875 vs. 66 seconds). These benefits are largely driven by a reduction in the cost per pricing iteration: the bucket graph keeps pricing tractable as instance size grows, whereas enumerating non-dominated routes becomes increasingly expensive.

\begin{figure}[h!]
  \centering
  \begin{subfigure}[b]{4.30cm}
    \centering
    \begin{tikzpicture}[x=1cm, y=1cm]

      \node[font=\tiny, black, left] at (-0.1,0) {0};
      \node[font=\tiny, black, left] at (-0.1,1.25) {1};
      \node[font=\tiny, black, left] at (-0.1,2.50) {2};
      \node[font=\tiny, black, left] at (-0.1,3.75) {3};
      \node[font=\tiny, black, left] at (-0.1,5.00) {4};

      \draw[->, >=stealth, black, line width=0.6pt] (0,-0.1) -- (0,5.40)
        node[above, font=\tiny, black] {Time (s)};

      \draw[fill=medBlue!20, draw=darkBlue!75, line width=0.5pt] (0.00, 0) rectangle (0.32, 1.31);
      \draw[fill=darkBlue!75, draw=darkBlue!75, line width=0.5pt] (0.44, 0) rectangle (0.76, 0.64);
      \node[font=\tiny, black, below=1pt] at (0.38, 0) {$T\!=\!2$};

      \draw[fill=medBlue!20, draw=darkBlue!75, line width=0.5pt] (1.16, 0) rectangle (1.48, 2.77);
      \draw[fill=darkBlue!75, draw=darkBlue!75, line width=0.5pt] (1.60, 0) rectangle (1.92, 1.08);
      \node[font=\tiny, black, below=1pt] at (1.54, 0) {$T\!=\!3$};

      \draw[fill=medBlue!20, draw=darkBlue!75, line width=0.5pt] (2.32, 0) rectangle (2.64, 4.77);
      \draw[fill=darkBlue!75, draw=darkBlue!75, line width=0.5pt] (2.76, 0) rectangle (3.08, 2.11);
      \node[font=\tiny, black, below=1pt] at (2.70, 0) {$T\!=\!4$};

      \draw[->, >=stealth, black, line width=0.6pt] (-0.1,0) -- (3.28,0);
    \end{tikzpicture}
    \caption{$n = 15$}
  \end{subfigure}
  \hspace{0.5cm}
  \begin{subfigure}[b]{4.30cm}
    \centering
    \begin{tikzpicture}[x=1cm, y=1cm]

      \node[font=\tiny, black, left] at (-0.1,0) {0};
      \node[font=\tiny, black, left] at (-0.1,1.25) {15};
      \node[font=\tiny, black, left] at (-0.1,2.50) {30};
      \node[font=\tiny, black, left] at (-0.1,3.75) {45};
      \node[font=\tiny, black, left] at (-0.1,5.00) {60};

      \draw[->, >=stealth, black, line width=0.6pt] (0,-0.1) -- (0,5.40)
        node[above, font=\tiny, black] {Time (s)};

      \draw[fill=medBlue!20, draw=darkBlue!75, line width=0.5pt] (0.00, 0) rectangle (0.32, 0.40);
      \draw[fill=darkBlue!75, draw=darkBlue!75, line width=0.5pt] (0.44, 0) rectangle (0.76, 0.75);
      \node[font=\tiny, black, below=1pt] at (0.38, 0) {$T\!=\!2$};

      \draw[fill=medBlue!20, draw=darkBlue!75, line width=0.5pt] (1.16, 0) rectangle (1.48, 0.56);
      \draw[fill=darkBlue!75, draw=darkBlue!75, line width=0.5pt] (1.60, 0) rectangle (1.92, 1.31);
      \node[font=\tiny, black, below=1pt] at (1.54, 0) {$T\!=\!3$};

      \draw[fill=medBlue!20, draw=darkBlue!75, line width=0.5pt] (2.32, 0) rectangle (2.64, 1.24);
      \draw[fill=darkBlue!75, draw=darkBlue!75, line width=0.5pt] (2.76, 0) rectangle (3.08, 4.32);
      \node[font=\tiny, black, below=1pt] at (2.70, 0) {$T\!=\!4$};

      \draw[->, >=stealth, black, line width=0.6pt] (-0.1,0) -- (3.28,0);
    \end{tikzpicture}
    \caption{$n = 20$}
  \end{subfigure}
  \hspace{0.5cm}
  \begin{subfigure}[b]{4.30cm}
    \centering
    \begin{tikzpicture}[x=1cm, y=1cm]

      \node[font=\tiny, black, left] at (-0.1,0) {0};
      \node[font=\tiny, black, left] at (-0.1,1.25) {250};
      \node[font=\tiny, black, left] at (-0.1,2.50) {500};
      \node[font=\tiny, black, left] at (-0.1,3.75) {750};
      \node[font=\tiny, black, left] at (-0.1,5.00) {1{,}000};

      \draw[->, >=stealth, black, line width=0.6pt] (0,-0.1) -- (0,5.40)
        node[above, font=\tiny, black] {Time (s)};

      \draw[fill=medBlue!20, draw=darkBlue!75, line width=0.5pt] (0.00, 0) rectangle (0.32, 0.08);
      \draw[fill=darkBlue!75, draw=darkBlue!75, line width=0.5pt] (0.44, 0) rectangle (0.76, 0.68);
      \node[font=\tiny, black, below=1pt] at (0.38, 0) {$T\!=\!2$};

      \draw[fill=medBlue!20, draw=darkBlue!75, line width=0.5pt] (1.16, 0) rectangle (1.48, 0.17);
      \draw[fill=darkBlue!75, draw=darkBlue!75, line width=0.5pt] (1.60, 0) rectangle (1.92, 1.01);
      \node[font=\tiny, black, below=1pt] at (1.54, 0) {$T\!=\!3$};

      \draw[fill=medBlue!20, draw=darkBlue!75, line width=0.5pt] (2.32, 0) rectangle (2.64, 0.33);
      \draw[fill=darkBlue!75, draw=darkBlue!75, line width=0.5pt] (2.76, 0) rectangle (3.08, 4.38);
      \node[font=\tiny, black, below=1pt] at (2.70, 0) {$T\!=\!4$};

      \draw[->, >=stealth, black, line width=0.6pt] (-0.1,0) -- (3.28,0);
    \end{tikzpicture}
    \caption{$n = 25$}
  \end{subfigure}
  \par\medskip
  \begin{tikzpicture}[x=1cm, y=1cm]
    \draw[fill=medBlue!20, draw=darkBlue!75, line width=0.5pt] (0,0) rectangle (0.32,0.20);
    \node[font=\tiny, black, right] at (0.32,0.10) {Adaptive};
    \draw[fill=darkBlue!75, draw=darkBlue!75, line width=0.5pt] (2.80,0) rectangle (3.12,0.20);
    \node[font=\tiny, black, right] at (3.12,0.10) {Enumerative};
  \end{tikzpicture}
  \caption{Root node computation time for the B-MPCVRP: adaptive partitioning and subpath-based enumerative benchmark. Results are averaged over all five instances and values of $\delta$.}
  \label{fig:cvrp_root_comparison}
\end{figure}

On the other hand, the adaptive partitioning algorithm requires substantially more iterations to prove optimality. Since the bucket graph is a coarsened approximation of the subpath graph, it is not always able to directly find the overall most negative reduced cost column. This explains why the benchmark is roughly twice as fast for $n = 15$, where enumeration is still relatively cheap. Yet, the adaptive partitioning algorithm developed in this paper yields significant improvement in computational times and overall scalability, thereby substantially expanding the range of instance sizes that can be solved within manageable computation times.

\subsection{Branch-Price-and-Cut}
\label{subsec:cvrp_branchPrice}

We now compare branch-price-and-cut for the path-based (schedule-based) formulation against its counterpart for a subpath-based (route-based) formulation. The baseline contains variables for all possible subpaths (routes) instead of all possible paths (route schedules). Let $R_{k}^{t}$ denote the set of feasible routes that can be assigned to vehicle $k \in K$ on day $t \in T$. For route $r \in R_{k}^{t}$, let $d_r$ denote its distance and let binary parameter $a_{rn}$ denote whether customer $n \in N_t$ is covered by route $r$. The route formulation reads
\begin{subequations}
\begin{align}
\min \quad & \sum_{t \in T} \sum_{k \in K} \sum_{r \in R^t_k} d_r x_{r} & \\ 
\text{s.t.} \quad & \sum_{k \in K} \sum_{r \in R^t_k} a_{rn} x_r = 1 && \forall t \in T, n \in N_t \\ 
& \sum_{r \in R^{t}_{k}} x_r = 1 && \forall t \in T, k \in K \\ 
& \sum_{t \in T} \sum_{r \in R^{t}_{k}} d_r x_r \leq D && \forall k \in K \label{eq:cvrp_subpath_distance} \\
& x_r \in \{0, 1\} && \forall t \in T, k \in K, r\in R^t_k.
\end{align}
\label{eq:cvrp_subpath_ip}%
\end{subequations} 
Whereas the schedule-based formulation relegates the distance constraint to the pricing problem, the route-based formulation contains explicit knapsack constraints enforcing the maximum distance along each path~\eqref{eq:cvrp_subpath_distance}. The LP relaxation can be solved via standard column generation, with a decomposable pricing problem across vehicle-day pairs $(k, t)$. As these subproblems share the same structure as the labeling algorithm used to determine bucket representatives, they can be solved using a version of that algorithm where no distance upper bound is imposed.

Apart from the pricing algorithms, we use nearly identical branch-price-and-cut algorithms for both formulations. We strengthen the LP relaxation by separating rounded capacity inequalities using the heuristic routines of~\citet{lysgaard2004new}. Branching follows a hierarchical strategy: for the route formulation we branch on (i) customer-to-vehicle assignments, (ii) arc cutsets, and (iii) individual arcs. For the schedule-based formulation we branch on (ii) and (iii) only. While this does not guarantee integrality of the resulting solution, feasibility of an integral completion can be verified efficiently by solving a small mixed-integer program. To find primal solutions, we invoke a restricted master heuristic every ten nodes with a time limit of five seconds. Since primal solutions are difficult to obtain within the schedule-based formulation, we decompose all schedules into their daily routes and apply the restricted master heuristic to the route formulation.

\begin{table}[htbp!]
  \centering
  \caption{Branch-price-and-cut for the B-MPCVRP: schedule- and route-based formulation. Results are averaged over five instances per configuration.}
  \label{tab:cvrp_branching}
  \begin{adjustbox}{max width=\linewidth, max totalheight=0.95\textheight, keepaspectratio}
  \begin{tabular}{rrr rrrr rrrr}
    \toprule
    & & & \multicolumn{4}{c}{Schedule} & \multicolumn{4}{c}{Route} \\
    \cmidrule(lr){4-7} \cmidrule(lr){8-11}
    $n$ & $T$ & $\delta$ (\%) & Opt. & Inf. & Time (s) & Nodes & Opt. & Inf. & Time (s) & Nodes \\
    \midrule
    \multirow{16}{*}{15} & \multirow{5}{*}{2} & 10 & 5/5 & 5/5 & 2 & 1 & 4/5 & 4/5 & 1{,}057 & 1{,}191 \\
     &  & 30 & 5/5 & 3/5 & 5 & 8 & 2/5 & 0/5 & 2{,}347 & 2{,}066 \\
     &  & 50 & 5/5 & 1/5 & 101 & 165 & 2/5 & 0/5 & 2{,}168 & 1{,}495 \\
     &  & 70 & 5/5 & 0/5 & 30 & 84 & 5/5 & 0/5 & 411 & 679 \\
     &  & 90 & 5/5 & 0/5 & 17 & 73 & 4/5 & 0/5 & 741 & 778 \\
    \cmidrule(lr){2-11}
     & \multirow{5}{*}{3} & 10 & 5/5 & 5/5 & 8 & 1 & 3/5 & 3/5 & 1{,}837 & 1{,}999 \\
     &  & 30 & 4/5 & 4/5 & 735 & 174 & 1/5 & 1/5 & 3{,}598 & 2{,}909 \\
     &  & 50 & 5/5 & 3/5 & 215 & 89 & 0/5 & 0/5 & 3{,}600 & 2{,}708 \\
     &  & 70 & 4/5 & 1/5 & 812 & 247 & 0/5 & 0/5 & 3{,}600 & 2{,}792 \\
     &  & 90 & 5/5 & 0/5 & 131 & 94 & 1/5 & 0/5 & 3{,}502 & 3{,}170 \\
    \cmidrule(lr){2-11}
     & \multirow{5}{*}{4} & 10 & 4/5 & 4/5 & 756 & 10 & 0/5 & 0/5 & 3{,}600 & 3{,}271 \\
     &  & 30 & 3/5 & 2/5 & 1{,}731 & 64 & 0/5 & 0/5 & 3{,}600 & 2{,}954 \\
     &  & 50 & 3/5 & 0/5 & 1{,}850 & 90 & 0/5 & 0/5 & 3{,}600 & 3{,}228 \\
     &  & 70 & 4/5 & 0/5 & 916 & 80 & 0/5 & 0/5 & 3{,}600 & 3{,}268 \\
     &  & 90 & 5/5 & 0/5 & 284 & 47 & 0/5 & 0/5 & 3{,}600 & 3{,}249 \\
    \cmidrule(lr){2-11}
     & & \textbf{Avg.} & \textbf{67/75} & \textbf{28/75} & \textbf{506} & \textbf{82} & \textbf{22/75} & \textbf{8/75} & \textbf{2{,}724} & \textbf{2{,}384} \\
    \midrule
    \multirow{16}{*}{20} & \multirow{5}{*}{2} & 10 & 5/5 & 5/5 & 9 & 1 & 5/5 & 5/5 & 1{,}135 & 1{,}037 \\
     &  & 30 & 5/5 & 4/5 & 13 & 3 & 0/5 & 0/5 & 3{,}600 & 1{,}831 \\
     &  & 50 & 4/5 & 2/5 & 732 & 109 & 1/5 & 0/5 & 3{,}194 & 1{,}781 \\
     &  & 70 & 5/5 & 1/5 & 255 & 62 & 4/5 & 0/5 & 1{,}643 & 1{,}179 \\
     &  & 90 & 4/5 & 0/5 & 735 & 115 & 3/5 & 0/5 & 1{,}571 & 949 \\
    \cmidrule(lr){2-11}
     & \multirow{5}{*}{3} & 10 & 5/5 & 5/5 & 73 & 5 & 2/5 & 2/5 & 3{,}269 & 2{,}021 \\
     &  & 30 & 4/5 & 3/5 & 983 & 52 & 0/5 & 0/5 & 3{,}600 & 1{,}809 \\
     &  & 50 & 2/5 & 1/5 & 2{,}358 & 144 & 0/5 & 0/5 & 3{,}600 & 1{,}961 \\
     &  & 70 & 3/5 & 1/5 & 1{,}632 & 113 & 1/5 & 0/5 & 3{,}409 & 1{,}894 \\
     &  & 90 & 4/5 & 1/5 & 854 & 66 & 0/5 & 0/5 & 3{,}600 & 1{,}975 \\
    \cmidrule(lr){2-11}
     & \multirow{5}{*}{4} & 10 & 3/5 & 3/5 & 1{,}591 & 8 & 0/5 & 0/5 & 3{,}600 & 2{,}258 \\
     &  & 30 & 2/5 & 2/5 & 2{,}655 & 16 & 0/5 & 0/5 & 3{,}600 & 2{,}083 \\
     &  & 50 & 0/5 & 0/5 & 3{,}600 & 37 & 0/5 & 0/5 & 3{,}600 & 2{,}099 \\
     &  & 70 & 2/5 & 0/5 & 2{,}856 & 32 & 0/5 & 0/5 & 3{,}600 & 2{,}068 \\
     &  & 90 & 1/5 & 0/5 & 3{,}032 & 49 & 0/5 & 0/5 & 3{,}600 & 2{,}078 \\
    \cmidrule(lr){2-11}
     & & \textbf{Avg.} & \textbf{49/75} & \textbf{28/75} & \textbf{1{,}425} & \textbf{54} & \textbf{16/75} & \textbf{7/75} & \textbf{3{,}108} & \textbf{1{,}802} \\
    \midrule
    \multirow{16}{*}{25} & \multirow{5}{*}{2} & 10 & 4/5 & 4/5 & 749 & 43 & 3/5 & 3/5 & 1{,}965 & 627 \\
     &  & 30 & 3/5 & 3/5 & 1{,}462 & 139 & 0/5 & 0/5 & 3{,}600 & 1{,}036 \\
     &  & 50 & 4/5 & 2/5 & 865 & 83 & 2/5 & 0/5 & 2{,}423 & 718 \\
     &  & 70 & 3/5 & 0/5 & 1{,}910 & 172 & 2/5 & 0/5 & 2{,}411 & 726 \\
     &  & 90 & 4/5 & 0/5 & 868 & 71 & 4/5 & 0/5 & 1{,}429 & 496 \\
    \cmidrule(lr){2-11}
     & \multirow{5}{*}{3} & 10 & 2/5 & 2/5 & 2{,}223 & 36 & 0/5 & 0/5 & 3{,}600 & 844 \\
     &  & 30 & 0/5 & 0/5 & 3{,}600 & 85 & 0/5 & 0/5 & 3{,}600 & 796 \\
     &  & 50 & 1/5 & 0/5 & 3{,}052 & 80 & 0/5 & 0/5 & 3{,}600 & 861 \\
     &  & 70 & 1/5 & 0/5 & 2{,}935 & 73 & 0/5 & 0/5 & 3{,}600 & 918 \\
     &  & 90 & 1/5 & 0/5 & 2{,}903 & 79 & 0/5 & 0/5 & 3{,}600 & 941 \\
    \cmidrule(lr){2-11}
     & \multirow{5}{*}{4} & 10 & 0/5 & 0/5 & 3{,}600 & 19 & 0/5 & 0/5 & 3{,}600 & 801 \\
     &  & 30 & 1/5 & 0/5 & 3{,}292 & 25 & 0/5 & 0/5 & 3{,}600 & 808 \\
     &  & 50 & 1/5 & 0/5 & 2{,}998 & 25 & 0/5 & 0/5 & 3{,}600 & 793 \\
     &  & 70 & 1/5 & 0/5 & 3{,}260 & 31 & 0/5 & 0/5 & 3{,}600 & 813 \\
     &  & 90 & 1/5 & 0/5 & 3{,}003 & 31 & 0/5 & 0/5 & 3{,}600 & 821 \\
    \cmidrule(lr){2-11}
     & & \textbf{Avg.} & \textbf{27/75} & \textbf{11/75} & \textbf{2{,}448} & \textbf{66} & \textbf{11/75} & \textbf{3/75} & \textbf{3{,}189} & \textbf{800} \\
    \bottomrule
  \end{tabular}
  \end{adjustbox}
\end{table}

Table~\ref{tab:cvrp_branching} reports the performance of both branch-price-and-cut algorithms across all configurations, subject to a time limit of one hour. For every configuration, we report the number of instances solved to optimality or proven infeasible, computation time, and number of explored nodes, averaged over five instances. The main observation is that the path-based (schedule-based) formulation consistently outperforms its subpath-based (route-based) counterpart, solving 67, 49, and 27 out of 75 instances to optimality for $n = 15, 20, 25$ respectively, compared to only 22, 16, and 11. This advantage stems in large part from the tighter LP relaxation obtained by embedding the knapsack constraint within the pricing problem: the schedule-based formulation explores far fewer branch-and-bound nodes on average and frequently concludes infeasibility already at the root node. The route-based formulation, in contrast, struggles to close the optimality gap. As expected,  the schedule-based formulation involves a longer time per node, reflecting the greater cost of path-based pricing relative to that of subpath-based pricing. Yet, the path-based (schedule-based) formulation yields net scalability benefits enabled by the adaptive partitioning algorithm.

\section{Computational Results on the RCSP} 
\label{sec:crew}

We now analyze the performance of the adaptive partitioning algorithm on the RCSP (Example~\ref{ex:rcsp}). We consider a template-based formulation as in problem~\eqref{eq:master_ip}, and solve its LP relaxation via column generation with the adaptive partitioning algorithm. Buckets are defined along two resource dimensions, specifying allowed start and end time intervals for duties. The problem of finding a bucket representative corresponds to the pricing problem of a regular crew scheduling problem and can be solved in polynomial time \Citep{rossum2022fast}.

We describe the experimental set-up in Section~\ref{subsec:crew_setup}. As in the B-MPCVRP, we analyze the effect of design choices on the performance of the bucket-based algorithm in Section~\ref{subsec:crew_strategies} and compare the best configuration with the benchmark in Section~\ref{subsec:crew_benchmark}. Finally, we employ the adaptive partitioning algorithm within a diving heuristic to obtain integral solutions in Section~\ref{subsec:crew_diving}.

\subsection{Set-Up}
\label{subsec:crew_setup}

Following \Citet{rossum2025benders}, we use crew scheduling data from Netherlands Railways, drawing on historical planning data from three weeks in October 2021. We construct instances for three crew bases: the small crew base Den Bosch (Ht) and the medium-sized crew bases Nijmegen (Nm) and Amersfoort (Amf). For each crew base, we construct one instance per day of the week, each containing three historical weeks as timetable scenarios. Table~\ref{tab:rcsp_instances} reports the average number of tasks and connections between tasks per crew base and day of the week. The largest instance contains $3 \times 593$ tasks to be covered across all scenarios. For each instance, we consider three settings for the maximum template length: 9h, 9h15, and 9h30, yielding a total of $63$ instances.

\begin{table}[htbp!]
  \centering
  \caption{Average number of tasks and connections per crew base and day of week, averaged over three weeks.}
  \label{tab:rcsp_instances}
  \begin{tabular}{l rr rr rr}
    \toprule
     & \multicolumn{2}{c}{Ht} & \multicolumn{2}{c}{Nm} & \multicolumn{2}{c}{Amf} \\
    \cmidrule(lr){2-3} \cmidrule(lr){4-5} \cmidrule(lr){6-7}
    Day & Tasks & Arcs & Tasks & Arcs & Tasks & Arcs \\
    \midrule
    Mon & 266 & 2{,}756 & 483 & 6{,}359 & 551 & 7{,}636 \\
    Tue & 291 & 3{,}255 & 491 & 6{,}344 & 552 & 7{,}804 \\
    Wed & 285 & 3{,}312 & 473 & 6{,}533 & 535 & 7{,}437 \\
    Thu & 334 & 3{,}755 & 516 & 7{,}129 & 593 & 7{,}574 \\
    Fri & 329 & 3{,}757 & 563 & 7{,}469 & 580 & 7{,}392 \\
    Sat & 247 & 2{,}575 & 437 & 5{,}546 & 440 & 4{,}510 \\
    Sun & 213 & 2{,}241 & 345 & 4{,}337 & 405 & 4{,}437 \\
    \bottomrule
  \end{tabular}
\end{table}

The algorithmic settings are identical to those described in Section~\ref{subsec:cvrp_setup}, with two exceptions. First, rather than adding all generated columns to the RMP in each pricing iteration, we retain only those columns that are sufficiently task-disjoint from one another, following the approach of \citet{breugem2022column}. Second, due to the large number of cover constraints and tasks per duty, we use a self-tuning dual smoothing parameter.

\subsection{Configuration of Adaptive Partitioning}
\label{subsec:crew_strategies}

Figure~\ref{fig:csp_root_strategies} plots the average computation time for different configurations of the adaptive partitioning algorithm when solving the root node of instances from crew base Ht with a maximum template length of 9 hours, averaged over all days of the week. We consider initial interval widths of 20, 30, and 40 minutes. A detailed breakdown of computation time across subroutines is provided in Table~\ref{tab:root_strategies_csp} in Appendix~\ref{app:results}. The overall patterns differ from those observed for the B-MPCVRP, which can be attributed to the different computational profile of this application: finding representatives is relatively cheap, so the majority of computation time is consumed by pessimistic pricing.

\begin{figure}[h!]
  \centering
  \begin{subfigure}[b]{4.60cm}
    \centering
    \begin{tikzpicture}[x=1cm, y=1cm]

      \node[font=\tiny, black, left] at (-0.1,0) {0};
      \node[font=\tiny, black, left] at (-0.1,1.25) {175};
      \node[font=\tiny, black, left] at (-0.1,2.50) {350};
      \node[font=\tiny, black, left] at (-0.1,3.75) {525};
      \node[font=\tiny, black, left] at (-0.1,5.00) {700};

      \draw[->, >=stealth, black, line width=0.6pt] (0,-0.1) -- (0,5.40)
        node[above, font=\tiny, black] {Time (s)};

      \draw[fill=medBlue!20, draw=darkBlue!75, line width=0.5pt] (0.00, 0) rectangle (0.32, 1.94);
      \draw[fill=darkBlue!75, draw=darkBlue!75, line width=0.5pt] (0.42, 0) rectangle (0.74, 5.00);
      \node[font=\tiny, black, below=1pt] at (0.37, 0) {20};

      \draw[fill=medBlue!20, draw=darkBlue!75, line width=0.5pt] (1.09, 0) rectangle (1.41, 1.66);
      \draw[fill=darkBlue!75, draw=darkBlue!75, line width=0.5pt] (1.51, 0) rectangle (1.83, 3.45);
      \node[font=\tiny, black, below=1pt] at (1.46, 0) {30};

      \draw[fill=medBlue!20, draw=darkBlue!75, line width=0.5pt] (2.18, 0) rectangle (2.50, 2.70);
      \draw[fill=darkBlue!75, draw=darkBlue!75, line width=0.5pt] (2.60, 0) rectangle (2.92, 2.68);
      \node[font=\tiny, black, below=1pt] at (2.55, 0) {40};

      \draw[->, >=stealth, black, line width=0.6pt] (-0.1,0) -- (3.12,0);
    \end{tikzpicture}
    \caption{No reuse, midway}
  \end{subfigure}
  \hspace{0.5cm}
  \begin{subfigure}[b]{4.60cm}
    \centering
    \begin{tikzpicture}[x=1cm, y=1cm]

      \node[font=\tiny, black, left] at (-0.1,0) {0};
      \node[font=\tiny, black, left] at (-0.1,1.25) {175};
      \node[font=\tiny, black, left] at (-0.1,2.50) {350};
      \node[font=\tiny, black, left] at (-0.1,3.75) {525};
      \node[font=\tiny, black, left] at (-0.1,5.00) {700};

      \draw[->, >=stealth, black, line width=0.6pt] (0,-0.1) -- (0,5.40)
        node[above, font=\tiny, black] {Time (s)};

      \draw[fill=medBlue!20, draw=darkBlue!75, line width=0.5pt] (0.00, 0) rectangle (0.32, 1.52);
      \draw[fill=darkBlue!75, draw=darkBlue!75, line width=0.5pt] (0.42, 0) rectangle (0.74, 5.00);
      \node[font=\tiny, black, below=1pt] at (0.37, 0) {20};

      \draw[fill=medBlue!20, draw=darkBlue!75, line width=0.5pt] (1.09, 0) rectangle (1.41, 1.79);
      \draw[fill=darkBlue!75, draw=darkBlue!75, line width=0.5pt] (1.51, 0) rectangle (1.83, 3.20);
      \node[font=\tiny, black, below=1pt] at (1.46, 0) {30};

      \draw[fill=medBlue!20, draw=darkBlue!75, line width=0.5pt] (2.18, 0) rectangle (2.50, 2.75);
      \draw[fill=darkBlue!75, draw=darkBlue!75, line width=0.5pt] (2.60, 0) rectangle (2.92, 3.21);
      \node[font=\tiny, black, below=1pt] at (2.55, 0) {40};

      \draw[->, >=stealth, black, line width=0.6pt] (-0.1,0) -- (3.12,0);
    \end{tikzpicture}
    \caption{No reuse, representative}
  \end{subfigure}
  \par\medskip
  \begin{subfigure}[b]{4.60cm}
    \centering
    \begin{tikzpicture}[x=1cm, y=1cm]

      \node[font=\tiny, black, left] at (-0.1,0) {0};
      \node[font=\tiny, black, left] at (-0.1,1.25) {625};
      \node[font=\tiny, black, left] at (-0.1,2.50) {1{,}250};
      \node[font=\tiny, black, left] at (-0.1,3.75) {1{,}875};
      \node[font=\tiny, black, left] at (-0.1,5.00) {2{,}500};

      \draw[->, >=stealth, black, line width=0.6pt] (0,-0.1) -- (0,5.40)
        node[above, font=\tiny, black] {Time (s)};

      \draw[fill=medBlue!20, draw=darkBlue!75, line width=0.5pt] (0.00, 0) rectangle (0.32, 1.53);
      \draw[fill=darkBlue!75, draw=darkBlue!75, line width=0.5pt] (0.42, 0) rectangle (0.74, 4.80);
      \node[font=\tiny, black, below=1pt] at (0.37, 0) {20};

      \draw[fill=medBlue!20, draw=darkBlue!75, line width=0.5pt] (1.09, 0) rectangle (1.41, 1.07);
      \draw[fill=darkBlue!75, draw=darkBlue!75, line width=0.5pt] (1.51, 0) rectangle (1.83, 2.64);
      \node[font=\tiny, black, below=1pt] at (1.46, 0) {30};

      \draw[fill=medBlue!20, draw=darkBlue!75, line width=0.5pt] (2.18, 0) rectangle (2.50, 2.49);
      \draw[fill=darkBlue!75, draw=darkBlue!75, line width=0.5pt] (2.60, 0) rectangle (2.92, 2.79);
      \node[font=\tiny, black, below=1pt] at (2.55, 0) {40};

      \draw[->, >=stealth, black, line width=0.6pt] (-0.1,0) -- (3.12,0);
    \end{tikzpicture}
    \caption{Reuse, midway}
  \end{subfigure}
  \hspace{0.5cm}
  \begin{subfigure}[b]{4.60cm}
    \centering
    \begin{tikzpicture}[x=1cm, y=1cm]

      \node[font=\tiny, black, left] at (-0.1,0) {0};
      \node[font=\tiny, black, left] at (-0.1,1.25) {625};
      \node[font=\tiny, black, left] at (-0.1,2.50) {1{,}250};
      \node[font=\tiny, black, left] at (-0.1,3.75) {1{,}875};
      \node[font=\tiny, black, left] at (-0.1,5.00) {2{,}500};

      \draw[->, >=stealth, black, line width=0.6pt] (0,-0.1) -- (0,5.40)
        node[above, font=\tiny, black] {Time (s)};

      \draw[fill=medBlue!20, draw=darkBlue!75, line width=0.5pt] (0.00, 0) rectangle (0.32, 1.49);
      \draw[fill=darkBlue!75, draw=darkBlue!75, line width=0.5pt] (0.42, 0) rectangle (0.74, 4.61);
      \node[font=\tiny, black, below=1pt] at (0.37, 0) {20};

      \draw[fill=medBlue!20, draw=darkBlue!75, line width=0.5pt] (1.09, 0) rectangle (1.41, 1.02);
      \draw[fill=darkBlue!75, draw=darkBlue!75, line width=0.5pt] (1.51, 0) rectangle (1.83, 2.86);
      \node[font=\tiny, black, below=1pt] at (1.46, 0) {30};

      \draw[fill=medBlue!20, draw=darkBlue!75, line width=0.5pt] (2.18, 0) rectangle (2.50, 2.97);
      \draw[fill=darkBlue!75, draw=darkBlue!75, line width=0.5pt] (2.60, 0) rectangle (2.92, 2.54);
      \node[font=\tiny, black, below=1pt] at (2.55, 0) {40};

      \draw[->, >=stealth, black, line width=0.6pt] (-0.1,0) -- (3.12,0);
    \end{tikzpicture}
    \caption{Reuse, representative}
  \end{subfigure}
  \par\medskip
  \begin{tikzpicture}[x=1cm, y=1cm]
    \draw[fill=medBlue!20, draw=darkBlue!75, line width=0.5pt] (0,0) rectangle (0.32,0.20);
    \node[font=\tiny, black, right] at (0.32,0.10) {Merge};
    \draw[fill=darkBlue!75, draw=darkBlue!75, line width=0.5pt] (2.20,0) rectangle (2.52,0.20);
    \node[font=\tiny, black, right] at (2.52,0.10) {No merge};
  \end{tikzpicture}
  \caption{Root node computation time of adaptive partitioning per configuration for the RCSP. Results for crew base Ht and a template length of 9 hours, averaged over all days of the week.}
  \label{fig:csp_root_strategies}
\end{figure}

The most striking difference from the B-MPCVRP is the effect of reusing representatives. Rather than reducing computation time, reuse consistently worsens performance by a large margin (upper vs. lower rows of Figure~\ref{fig:csp_root_strategies}). When computing optimal representatives is cheap, as in the RCSP, the savings from reuse do not outweigh the slower convergence of the overall algorithm. By contrast, merging retains a consistently positive effect across all configurations, as it reduces the size of the bucket graph and thereby limits the cost of the pessimistic pricing step. The choice of the refinement strategy has little impact on computation time in this problem, though the representative strategy shows a slight edge over the midpoint strategy. Altogether, the best-performing configuration combines no reuse, representative refinement, merging, and an initial bucket width of 20 minutes, which we adopt for all remaining experiments in this section.

\subsection{Adaptive Partitioning versus Enumerative Benchmark}
\label{subsec:crew_benchmark}

We now compare the adaptive partitioning algorithm against the enumerative benchmark. Figure~\ref{fig:rcsp_root_comparison} shows the root node computation times of both methods, averaged over all days of the week, with iteration counts reported in Table~\ref{tab:rcsp_root_comparison} in Appendix~\ref{app:results}. Again, these results show that adaptive partitioning yields significant performance improvements, with speed-ups of up to a factor of three on the larger instances. The computational gains are more modest than for the B-MPCVRP, however, reflecting the different bottleneck in this application: since finding representatives and enumerating duties is relatively cheap, computation times are largely driven by the total number of column generation iterations. Table~\ref{tab:rcsp_root_comparison} confirms that adaptive partitioning is considerably faster per iteration, but requires more iterations to converge (up to 8,658 vs. at most 2,642). This difference can be explained by the fact that the RMP is a highly degenerate, large-scale set covering problem. While the overall effect of adaptive partitioning is already positive, advanced dual stabilization techniques appear necessary to fully leverage its potential for this problem.

\begin{figure}[h!]
  \centering
  \begin{subfigure}[b]{4.30cm}
    \centering
    \begin{tikzpicture}[x=1cm, y=1cm]

      \node[font=\tiny, black, left] at (-0.1,0) {0};
      \node[font=\tiny, black, left] at (-0.1,1.25) {75};
      \node[font=\tiny, black, left] at (-0.1,2.50) {150};
      \node[font=\tiny, black, left] at (-0.1,3.75) {225};
      \node[font=\tiny, black, left] at (-0.1,5.00) {300};

      \draw[->, >=stealth, black, line width=0.6pt] (0,-0.1) -- (0,5.40)
        node[above, font=\tiny, black] {Time (s)};

      \draw[fill=medBlue!20, draw=darkBlue!75, line width=0.5pt] (0.00, 0) rectangle (0.32, 4.79);
      \draw[fill=darkBlue!75, draw=darkBlue!75, line width=0.5pt] (0.44, 0) rectangle (0.76, 3.89);
      \node[font=\tiny, black, below=1pt] at (0.38, 0) {9h};

      \draw[fill=medBlue!20, draw=darkBlue!75, line width=0.5pt] (1.16, 0) rectangle (1.48, 2.85);
      \draw[fill=darkBlue!75, draw=darkBlue!75, line width=0.5pt] (1.60, 0) rectangle (1.92, 3.11);
      \node[font=\tiny, black, below=1pt] at (1.54, 0) {9.25h};

      \draw[fill=medBlue!20, draw=darkBlue!75, line width=0.5pt] (2.32, 0) rectangle (2.64, 2.42);
      \draw[fill=darkBlue!75, draw=darkBlue!75, line width=0.5pt] (2.76, 0) rectangle (3.08, 3.16);
      \node[font=\tiny, black, below=1pt] at (2.70, 0) {9.5h};

      \draw[->, >=stealth, black, line width=0.6pt] (-0.1,0) -- (3.28,0);
    \end{tikzpicture}
    \caption{Ht}
  \end{subfigure}
  \hspace{0.5cm}
  \begin{subfigure}[b]{4.30cm}
    \centering
    \begin{tikzpicture}[x=1cm, y=1cm]

      \node[font=\tiny, black, left] at (-0.1,0) {0};
      \node[font=\tiny, black, left] at (-0.1,1.25) {1{,}263};
      \node[font=\tiny, black, left] at (-0.1,2.50) {2{,}525};
      \node[font=\tiny, black, left] at (-0.1,3.75) {3{,}788};
      \node[font=\tiny, black, left] at (-0.1,5.00) {5{,}050};

      \draw[->, >=stealth, black, line width=0.6pt] (0,-0.1) -- (0,5.40)
        node[above, font=\tiny, black] {Time (s)};

      \draw[fill=medBlue!20, draw=darkBlue!75, line width=0.5pt] (0.00, 0) rectangle (0.32, 2.89);
      \draw[fill=darkBlue!75, draw=darkBlue!75, line width=0.5pt] (0.44, 0) rectangle (0.76, 4.99);
      \node[font=\tiny, black, below=1pt] at (0.38, 0) {9h};

      \draw[fill=medBlue!20, draw=darkBlue!75, line width=0.5pt] (1.16, 0) rectangle (1.48, 2.40);
      \draw[fill=darkBlue!75, draw=darkBlue!75, line width=0.5pt] (1.60, 0) rectangle (1.92, 3.99);
      \node[font=\tiny, black, below=1pt] at (1.54, 0) {9.25h};

      \draw[fill=medBlue!20, draw=darkBlue!75, line width=0.5pt] (2.32, 0) rectangle (2.64, 1.31);
      \draw[fill=darkBlue!75, draw=darkBlue!75, line width=0.5pt] (2.76, 0) rectangle (3.08, 4.13);
      \node[font=\tiny, black, below=1pt] at (2.70, 0) {9.5h};

      \draw[->, >=stealth, black, line width=0.6pt] (-0.1,0) -- (3.28,0);
    \end{tikzpicture}
    \caption{Nm}
  \end{subfigure}
  \hspace{0.5cm}
  \begin{subfigure}[b]{4.30cm}
    \centering
    \begin{tikzpicture}[x=1cm, y=1cm]

      \node[font=\tiny, black, left] at (-0.1,0) {0};
      \node[font=\tiny, black, left] at (-0.1,1.25) {1{,}650};
      \node[font=\tiny, black, left] at (-0.1,2.50) {3{,}300};
      \node[font=\tiny, black, left] at (-0.1,3.75) {4{,}950};
      \node[font=\tiny, black, left] at (-0.1,5.00) {6{,}600};

      \draw[->, >=stealth, black, line width=0.6pt] (0,-0.1) -- (0,5.40)
        node[above, font=\tiny, black] {Time (s)};

      \draw[fill=medBlue!20, draw=darkBlue!75, line width=0.5pt] (0.00, 0) rectangle (0.32, 3.56);
      \draw[fill=darkBlue!75, draw=darkBlue!75, line width=0.5pt] (0.44, 0) rectangle (0.76, 3.31);
      \node[font=\tiny, black, below=1pt] at (0.38, 0) {9h};

      \draw[fill=medBlue!20, draw=darkBlue!75, line width=0.5pt] (1.16, 0) rectangle (1.48, 3.57);
      \draw[fill=darkBlue!75, draw=darkBlue!75, line width=0.5pt] (1.60, 0) rectangle (1.92, 4.98);
      \node[font=\tiny, black, below=1pt] at (1.54, 0) {9.25h};

      \draw[fill=medBlue!20, draw=darkBlue!75, line width=0.5pt] (2.32, 0) rectangle (2.64, 1.37);
      \draw[fill=darkBlue!75, draw=darkBlue!75, line width=0.5pt] (2.76, 0) rectangle (3.08, 1.67);
      \node[font=\tiny, black, below=1pt] at (2.70, 0) {9.5h};

      \draw[->, >=stealth, black, line width=0.6pt] (-0.1,0) -- (3.28,0);
    \end{tikzpicture}
    \caption{Amf}
  \end{subfigure}
  \par\medskip
  \begin{tikzpicture}[x=1cm, y=1cm]
    \draw[fill=medBlue!20, draw=darkBlue!75, line width=0.5pt] (0,0) rectangle (0.32,0.20);
    \node[font=\tiny, black, right] at (0.32,0.10) {Adaptive};
    \draw[fill=darkBlue!75, draw=darkBlue!75, line width=0.5pt] (2.80,0) rectangle (3.12,0.20);
    \node[font=\tiny, black, right] at (3.12,0.10) {Enumerative};
  \end{tikzpicture}
  \caption{Root node computation time for the RCSP: adaptive partitioning and enumerative benchmark. Results are averaged over all days of the week.}
  \label{fig:rcsp_root_comparison}
\end{figure}

\subsection{Diving Heuristic}
\label{subsec:crew_diving}

Finally, we apply the adaptive partitioning algorithm within a diving heuristic to find primal solutions to the RCSP, following the approach commonly used for large-scale crew scheduling problems \Citep{rahlmann2021robust, breugem2022column, rossum2025benders}. We iteratively solve the RMP using column generation, fix the highest-valued fractional column, and all other fractional columns whose value exceeds 0.6, to one, and repeat until the solution is integral. Table~\ref{tab:rcsp_diving} reports the results, averaged over all days of the week. The heuristic consistently produces high-quality solutions, with an average optimality gap never exceeding 1\% across all crew bases and template lengths. Computation times are reasonable, remaining below two hours for the medium-sized crew bases Ht and Nm, and below four hours for the largest crew base Amf. The results reveal a clear cost reduction as template length increases, most pronounced in the step from 9 to 9.25 hours, consistent with earlier findings \Citep{rahlmann2021robust,rossum2025benders}.

\begin{table}[H]
  \centering
  \caption{Results of diving heuristic with adaptive partitioning for the RCSP. Results are averaged over all days of the week.}
  \label{tab:rcsp_diving}
  \begin{tabular}{ll rrrr}
    \toprule
    Base & Length (h) & Time (s) & LB & UB & Gap (\%) \\
    \midrule
    \multirow{3}{*}{Ht} & 9 & 451 & 10{,}268{,}308 & 10{,}337{,}834 & 0.72 \\
     & 9.25 & 293 & 10{,}102{,}181 & 10{,}193{,}900 & 0.79 \\
     & 9.5 & 460 & 10{,}036{,}793 & 10{,}059{,}791 & 0.19 \\
    \midrule
    \multirow{3}{*}{Nm} & 9 & 4{{,}}118 & 15{,}350{,}868 & 15{,}373{,}500 & 0.14 \\
     & 9.25 & 5{{,}}282 & 15{,}163{,}676 & 15{,}224{,}569 & 0.39 \\
     & 9.5 & 4{{,}}548 & 15{,}051{,}513 & 15{,}223{,}283 & 1.00 \\
    \midrule
    \multirow{3}{*}{Amf} & 9 & 11{{,}}602 & 17{,}433{,}038 & 17{,}506{,}189 & 0.42 \\
     & 9.25 & 9{{,}}055 & 17{,}113{,}477 & 17{,}257{,}383 & 0.82 \\
     & 9.5 & 7{{,}}400 & 17{,}003{,}783 & 17{,}142{,}494 & 0.77 \\
    \bottomrule
  \end{tabular}
\end{table}

\section{Conclusion}
\label{sec:conclusion}

We studied a class of nested path problems in which every column corresponds to a sequence of subpaths satisfying local resource constraints, combined into paths subject to global path resource constraints. We proposed an adaptive partitioning algorithm that solves the pricing problem to optimality in a finite number of iterations. As compared to state-of-the-art pricing algorithms, this approach does not resort to ex-ante enumeration of non-dominated subpaths. Rather, it partitions subpaths into buckets defined by resource intervals, represents each bucket by its subpath of minimum reduced cost, and applies pessimistic and optimistic pricing steps on a coarsened bucket graph. These steps maintain valid upper and lower bounds on the minimum reduced cost, and an adaptive refinement procedure provably closes the gap in a finite number of iterations. Altogether, this paper contributes a novel adaptive decomposition scheme to enhance the pricing algorithm in large-scale column generation and branch-and-price algorithms for nested path problems.

We demonstrated the effectiveness of the algorithm on two applications, with different structural features that uncover different mechanisms underlying the algorithm's benefits. In the balanced multi-period capacitated vehicle routing problem, the adaptive partitioning algorithm yields speed-ups of up to a factor of 13 against an enumerative benchmark, and the resulting branch-price-and-cut algorithm solves three times as many instances to optimality as a subpath-based branch-price-and-cut baseline. In this case, the main bottleneck involves finding representatives within each bucket, so the algorithm's gains stem from avoiding the enumeration of all non-dominated subpaths. In the robust crew scheduling problem, we obtain speed-ups of up to a factor of three and produce primal solutions within 1\% of optimality through a diving heuristic. In this case, finding representative duties is comparatively cheaper, and the computational gains instead come from maintaining a much smaller bucket graph when generating full paths. As a result of these structural differences, the two applications call for different algorithmic configurations. For example, reuse of representatives is essential in the former problem but counterproductive in the latter.

We identify several promising directions for future research. For instance, the refinement procedure could be made more aggressive: in our current implementation, refinement is triggered only when pessimistic pricing fails, but one could apply it earlier when pessimistic pricing still returns columns of negative reduced cost but progress appears to be stalling. Moreover, the algorithm can be applied to other nested path problems, both within and beyond transportation. A challenging test case would be the integrated crew scheduling and rostering problem, a long-standing open problem in railway planning. Tackling this problem would likely require pairing our algorithm with dual stabilization techniques \citep{elhallaoui2005dynamic}.

\bibliographystyle{style/informs2014}
\bibliography{references}

\newpage

\begin{APPENDICES}

\section{Full Results}
\label{app:results}

\begin{table}[h!]
  \centering
  \caption{Root node computational performance of adaptive partitioning per configuration for the B-MPCVRP. Results are averaged over all instances with $n=25$, $T=4$, and $\delta=10\%$.}
  \label{tab:root_strategies_cvrp}
  \begin{tabular}{cccc rrrrr}
    \toprule
    \multicolumn{4}{c}{Parameters} & & \multicolumn{4}{c}{Share of pricing (\%)} \\
    \cmidrule(lr){1-4} \cmidrule(lr){6-9}
    Merge & Midway & Reuse & Width & Time (s) & Fill & Pess. & Opt. & Merge \\
    \midrule
    \multirow{12}{*}{No} & \multirow{6}{*}{Yes} & \multirow{3}{*}{No} & 100 & 174 & 99.6 & 0.3 & 0.1 & 0.0 \\
     &  &  & 250 & 152 & 99.8 & 0.2 & 0.1 & 0.0 \\
     &  &  & 500 & 175 & 99.8 & 0.1 & 0.1 & 0.0 \\
    \cmidrule(lr){3-9}
     &  & \multirow{3}{*}{Yes} & 100 & 135 & 99.3 & 0.6 & 0.1 & 0.0 \\
     &  &  & 250 & 119 & 99.5 & 0.4 & 0.1 & 0.0 \\
     &  &  & 500 & 156 & 99.6 & 0.3 & 0.1 & 0.0 \\
    \cmidrule(lr){2-9}
     & \multirow{6}{*}{No} & \multirow{3}{*}{No} & 100 & 159 & 99.8 & 0.2 & 0.0 & 0.0 \\
     &  &  & 250 & 132 & 99.9 & 0.1 & 0.0 & 0.0 \\
     &  &  & 500 & 270 & 99.9 & 0.1 & 0.0 & 0.0 \\
    \cmidrule(lr){3-9}
     &  & \multirow{3}{*}{Yes} & 100 & 141 & 99.4 & 0.6 & 0.0 & 0.0 \\
     &  &  & 250 & 124 & 99.6 & 0.4 & 0.0 & 0.0 \\
     &  &  & 500 & 216 & 99.6 & 0.4 & 0.0 & 0.0 \\
    \midrule
    \multirow{12}{*}{Yes} & \multirow{6}{*}{Yes} & \multirow{3}{*}{No} & 100 & 126 & 99.4 & 0.2 & 0.1 & 0.3 \\
     &  &  & 250 & 124 & 99.8 & 0.1 & 0.1 & 0.1 \\
     &  &  & 500 & 169 & 99.8 & 0.1 & 0.1 & 0.1 \\
    \cmidrule(lr){3-9}
     &  & \multirow{3}{*}{Yes} & 100 & 109 & 99.1 & 0.7 & 0.1 & 0.2 \\
     &  &  & 250 & 105 & 99.4 & 0.4 & 0.1 & 0.1 \\
     &  &  & 500 & 143 & 99.6 & 0.3 & 0.1 & 0.0 \\
    \cmidrule(lr){2-9}
     & \multirow{6}{*}{No} & \multirow{3}{*}{No} & 100 & 113 & 99.7 & 0.1 & 0.0 & 0.2 \\
     &  &  & 250 & 122 & 99.8 & 0.1 & 0.0 & 0.1 \\
     &  &  & 500 & 242 & 99.8 & 0.1 & 0.0 & 0.1 \\
    \cmidrule(lr){3-9}
     &  & \multirow{3}{*}{Yes} & 100 & 102 & 99.3 & 0.6 & 0.0 & 0.2 \\
     &  &  & 250 & 104 & 99.5 & 0.3 & 0.0 & 0.1 \\
     &  &  & 500 & 197 & 99.6 & 0.4 & 0.0 & 0.1 \\
    \bottomrule
  \end{tabular}
\end{table}

\begin{table}[h!]
  \centering
  \caption{Root node computational performance for the B-MPCVRP: adaptive partitioning and enumerative benchmark. Results are averaged over all five instances and values of $\delta$.}
  \label{tab:cvrp_root_comparison}
  \begin{tabular}{rrrrrrr}
    \toprule
    & & \multicolumn{2}{c}{Adaptive} & \multicolumn{2}{c}{Benchmark} \\
    \cmidrule(lr){3-4} \cmidrule(lr){5-6}
    $n$ & $T$ & Time (s) & Iterations & Time (s) & Iterations \\
    \midrule
    \multirow{3}{*}{15} & 2 & 1.1 & 25.6 & 0.5 & 9.0 \\
     & 3 & 2.2 & 49.8 & 0.9 & 12.8 \\
     & 4 & 3.8 & 115.8 & 1.7 & 23.2 \\
    \midrule
    \multirow{3}{*}{20} & 2 & 4.8 & 66.2 & 9.0 & 12.6 \\
     & 3 & 6.7 & 87.6 & 15.7 & 15.8 \\
     & 4 & 14.9 & 240.0 & 51.9 & 41.4 \\
    \midrule
    \multirow{3}{*}{25} & 2 & 15.8 & 111.4 & 136.5 & 15.2 \\
     & 3 & 33.7 & 180.2 & 202.6 & 13.8 \\
     & 4 & 65.7 & 348.4 & 875.5 & 42.4 \\
    \bottomrule
  \end{tabular}
\end{table}

\begin{table}[h!]
  \centering
  \caption{Root node computational performance of adaptive partitioning per configuration for the RCSP. Results for crew base Ht and a template length of 9 hours, averaged over all days of the week.}
  \label{tab:root_strategies_csp}
  \begin{tabular}{cccc rrrrr}
    \toprule
    \multicolumn{4}{c}{Parameters} & & \multicolumn{4}{c}{Share of pricing (\%)} \\
    \cmidrule(lr){1-4} \cmidrule(lr){6-9}
    Merge & Midway & Reuse & Width (min) & Time (s) & Fill & Pess. & Opt. & Merge \\
    \midrule
    \multirow{12}{*}{No} & \multirow{6}{*}{Yes} & \multirow{3}{*}{No} & 20 & 838 & 6.6 & 85.9 & 7.5 & 0.0 \\
     &  &  & 30 & 483 & 16.4 & 77.1 & 6.6 & 0.0 \\
     &  &  & 40 & 375 & 26.1 & 68.3 & 5.6 & 0.0 \\
    \cmidrule(lr){3-9}
     &  & \multirow{3}{*}{Yes} & 20 & 2{,}402 & 0.6 & 96.7 & 2.7 & 0.0 \\
     &  &  & 30 & 1{,}318 & 1.8 & 97.2 & 1.0 & 0.0 \\
     &  &  & 40 & 1{,}396 & 3.7 & 95.0 & 1.3 & 0.0 \\
    \cmidrule(lr){2-9}
     & \multirow{6}{*}{No} & \multirow{3}{*}{No} & 20 & 871 & 6.6 & 84.8 & 8.6 & 0.0 \\
     &  &  & 30 & 448 & 15.4 & 78.4 & 6.2 & 0.0 \\
     &  &  & 40 & 449 & 27.2 & 68.3 & 4.5 & 0.0 \\
    \cmidrule(lr){3-9}
     &  & \multirow{3}{*}{Yes} & 20 & 2{,}303 & 0.5 & 96.6 & 2.9 & 0.0 \\
     &  &  & 30 & 1{,}428 & 1.8 & 97.4 & 0.9 & 0.0 \\
     &  &  & 40 & 1{,}271 & 3.8 & 95.1 & 1.1 & 0.0 \\
    \midrule
    \multirow{12}{*}{Yes} & \multirow{6}{*}{Yes} & \multirow{3}{*}{No} & 20 & 272 & 12.4 & 20.8 & 2.0 & 64.8 \\
     &  &  & 30 & 232 & 14.2 & 16.4 & 2.6 & 66.7 \\
     &  &  & 40 & 378 & 19.5 & 16.8 & 2.1 & 61.5 \\
    \cmidrule(lr){3-9}
     &  & \multirow{3}{*}{Yes} & 20 & 764 & 1.9 & 91.1 & 0.8 & 6.2 \\
     &  &  & 30 & 535 & 5.2 & 68.5 & 1.0 & 25.4 \\
     &  &  & 40 & 1{,}245 & 6.4 & 65.6 & 0.6 & 27.5 \\
    \cmidrule(lr){2-9}
     & \multirow{6}{*}{No} & \multirow{3}{*}{No} & 20 & 212 & 12.4 & 23.9 & 1.6 & 62.1 \\
     &  &  & 30 & 251 & 17.7 & 16.6 & 1.1 & 64.6 \\
     &  &  & 40 & 385 & 19.9 & 16.4 & 1.0 & 62.7 \\
    \cmidrule(lr){3-9}
     &  & \multirow{3}{*}{Yes} & 20 & 746 & 2.0 & 91.0 & 0.7 & 6.3 \\
     &  &  & 30 & 509 & 5.2 & 71.2 & 0.4 & 23.2 \\
     &  &  & 40 & 1{,}486 & 6.7 & 64.3 & 0.4 & 28.5 \\
    \bottomrule
  \end{tabular}
\end{table}

\begin{table}[H]
  \centering
  \caption{Root node computational performance for the RCSP: adaptive partitioning and enumerative benchmark. Results are averaged over all days of the week.}
  \label{tab:rcsp_root_comparison}
  \begin{tabular}{llrrrr}
    \toprule
    & & \multicolumn{2}{c}{Adaptive} & \multicolumn{2}{c}{Benchmark} \\
    \cmidrule(lr){3-4} \cmidrule(lr){5-6}
    Base & Length & Time (s) & Iterations & Time (s) & Iterations \\
    \midrule
    \multirow{3}{*}{Ht} & 9h & 287 & 1{,}300 & 233 & 289 \\
     & 9.25h & 171 & 467 & 186 & 251 \\
     & 9.5h & 145 & 319 & 190 & 268 \\
    \midrule
    \multirow{3}{*}{Nm} & 9h & 2{,}920 & 4{,}780 & 5{,}035 & 2{,}483 \\
     & 9.25h & 2{,}424 & 4{,}142 & 4{,}028 & 1{,}822 \\
     & 9.5h & 1{,}324 & 2{,}455 & 4{,}173 & 1{,}965 \\
    \midrule
    \multirow{3}{*}{Amf} & 9h & 4{,}699 & 8{,}658 & 4{,}369 & 1{,}917 \\
     & 9.25h & 4{,}710 & 8{,}607 & 6{,}573 & 2{,}642 \\
     & 9.5h & 1{,}807 & 2{,}373 & 2{,}211 & 885 \\
    \bottomrule
  \end{tabular}
\end{table}

\end{APPENDICES}


\end{document}